\documentclass[reqno]{amsart}

\usepackage{aliascnt}
\usepackage{hyperref}
\usepackage{amsfonts}
\usepackage{mathrsfs}
\usepackage{amsmath}
\usepackage{amsmath,graphics}
\usepackage{amssymb}
\usepackage{indentfirst,latexsym,bm,amsmath,pstricks,amssymb,amsthm,graphicx,fancyhdr,float,color}
\usepackage[all]{xy}
\usepackage{amsthm}
\usepackage{amscd}
\usepackage{hyperref}
\usepackage[all]{hypcap}
\newtheorem{theorem}{Theorem}[section]

\newaliascnt{lemma}{theorem}
\newtheorem{lemma}[lemma]{Lemma}
\aliascntresetthe{lemma}

\newaliascnt{conjecture}{theorem}
\newtheorem{conjecture}[conjecture]{Conjecture}
\aliascntresetthe{conjecture}

\newaliascnt{proposition}{theorem}
\newtheorem{proposition}[proposition]{Proposition}
\aliascntresetthe{proposition}

\newaliascnt{corollary}{theorem}
\newtheorem{corollary}[corollary]{Corollary}
\aliascntresetthe{corollary}

\newaliascnt{problem}{theorem}

\aliascntresetthe{problem}

\theoremstyle{definition}

\newaliascnt{definition}{theorem}
\newtheorem{definition}[definition]{Definition}
\aliascntresetthe{definition}

\newaliascnt{example}{theorem}

\aliascntresetthe{example}

\theoremstyle{remark}

\newaliascnt{remark}{theorem}

\aliascntresetthe{remark}

\newaliascnt{remarks}{theorem}

\aliascntresetthe{remarks}


\numberwithin{equation}{section}



\setlength{\parskip}{0.5mm}
\def\wt{\widetilde}

\def\lra{\longrightarrow}

\def\Aut{\text{\rm{Aut\,}}}
\def\({$($}
\def\){$)$}

\def\Pic{\text{{\rm Pic\,}}}

\def\Alb{{\rm Alb}}
\def\simn{\sim_{\rm num}}

\begin{document}

\title{On the Severi type inequalities for irregular surfaces}

\author{Xin Lu}
\address{Institut f\"ur Mathematik, Universit\"at Mainz, Mainz, Germany, 55099}
\email{luxin001@uni-mainz.de}
\thanks{This work is supported by SFB/Transregio 45 Periods, Moduli Spaces and Arithmetic of Algebraic Varieties of the DFG (Deutsche Forschungsgemeinschaft),
and partially supported by National Key Basic Research Program of China (Grant No. 2013CB834202).}

\author{Kang Zuo}
\address{Institut f\"ur Mathematik, Universit\"at Mainz, Mainz, Germany, 55099}
\email{zuok@uni-mainz.de}

\subjclass[2010]{14J29}




\phantomsection
\begin{abstract}
\addcontentsline{toc}{section}{Abstract}
Let $X$ be a minimal surface of general type and maximal Albanese dimension with irregularity $q\geq 2$.
We show that $K_X^2\geq 4\chi(\mathcal O_X)+4(q-2)$ if $K_X^2<\frac92\chi(\mathcal O_X)$,
and also obtain the characterization of the equality.
As a consequence, we prove a conjecture of Manetti on the geography of irregular surfaces
if $K_X^2\geq 36(q-2)$ or $\chi(\mathcal O_X)\geq 8(q-2)$,
and we also prove a conjecture that surfaces of general type and maximal Albanese dimension
with $K_X^2=4\chi(\mathcal O_X)$ are exactly the resolution of double covers of abelian surfaces
branched over ample divisors with at worst simple singularities.
\end{abstract}

\maketitle

\tableofcontents

\section{Introduction}
Let $X$ be a minimal surface of general type and maximal Albanese dimension with irregularity $q\geq 2$.
The Severi inequality states that
$$K_X^2\geq 4\chi(\mathcal O_X).$$
It was proved by Manetti \cite{manetti-03} using the analysis on the one-forms on $X$ provided that $K_X$ is ample;
and it was completely proved by Pardini \cite{pardini-05} based on Albanese lifting technique and limiting argument.
A natural question arising is when the equality holds.
\begin{conjecture}[{\cite[\S\,0]{manetti-03} \& \cite[\S\,5.2]{lopes-pardini-12}}]\label{conj-1}
Let $X$ be a minimal surface of general type and maximal Albanese dimension.
Then $K_X^2=4\chi(\mathcal O_X)$ if and only if
the canonical model of $X$ is a flat double cover of an abelian surface
branched over an ample divisor with at worst simple singularities.
\end{conjecture}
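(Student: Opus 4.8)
The plan is to deduce \autoref{conj-1} from the main theorem of this paper, namely the inequality $K_X^2\geq 4\chi(\calo_X)+4(q-2)$ (valid whenever $K_X^2<\tfrac92\chi(\calo_X)$) together with the classification of the surfaces attaining equality; granting those, the argument is short. For the ``if'' direction I would argue by direct computation. Suppose the canonical model $X_{\mathrm{can}}$ of $X$ is a flat double cover $\pi\colon X_{\mathrm{can}}\to A$ of an abelian surface $A$, branched over an ample divisor $B\subseteq A$ with at worst simple singularities, and write $\pi_*\calo_{X_{\mathrm{can}}}=\calo_A\oplus L^{-1}$ with $L^{\otimes 2}\cong\calo_A(B)$. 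Then $X_{\mathrm{can}}$ has du Val singularities over the simple singularities of $B$, so the contraction $X\to X_{\mathrm{can}}$ is crepant; and Riemann--Roch on the abelian surface $A$ gives $K_X^2=K_{X_{\mathrm{can}}}^2=(\pi^*L)^2=2L^2$ and $\chi(\calo_X)=\chi(\calo_{X_{\mathrm{can}}})=\chi(\calo_A)+\chi(L^{-1})=0+\tfrac12L^2$, whence $K_X^2=4\chi(\calo_X)$. (Ampleness of $B$, equivalently of $L$, is what guarantees $\omega_{X_{\mathrm{can}}}=\pi^*(\omega_A\otimes L)=\pi^*L$ is ample, i.e.\ that $X_{\mathrm{can}}$ is an honest canonical model.)

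For the ``only if'' direction, I would proceed as follows. Assume $K_X^2=4\chi(\calo_X)$. Since $X$ is a minimal surface of general type, $K_X^2>0$, hence $\chi(\calo_X)=\tfrac14K_X^2>0$ and so $K_X^2=4\chi(\calo_X)<\tfrac92\chi(\calo_X)$; the main inequality then applies and yields $4\chi(\calo_X)=K_X^2\geq 4\chi(\calo_X)+4(q-2)$, forcing $q\leq 2$. As $X$ has maximal Albanese dimension, $q\geq 2$, so $q=2$ and $X$ attains equality in the main inequality. At this point I would simply invoke the classification of the equality case, which for $q=2$ says precisely that the canonical model of $X$ is a flat double cover of an abelian surface branched over an ample divisor with at worst simple singularities. (In particular the Albanese morphism $\mathrm{alb}_X\colon X\to\Alb(X)$, which is generically finite and surjective onto the abelian surface $\Alb(X)$ of some degree $d\geq 2$ --- the bound $d\geq 2$ holding because $X$ is of general type --- is forced to have $d=2$.)

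The upshot is that, modulo the main theorem and its equality characterization, the ``only if'' implication is immediate and the ``if'' implication is the Riemann--Roch computation above; so the real obstacle is not in this deduction but in proving the equality characterization for $q=2$ in the first place, i.e.\ in reconstructing the degree-$2$ map to an abelian surface from the single numerical equality $K_X^2=4\chi(\calo_X)$ and in controlling the singularities of its branch locus. One small point I would flag explicitly: \autoref{conj-1} concerns the canonical model whereas the inequality is stated for the minimal model, but this causes no difficulty because $K_X^2=K_{X_{\mathrm{can}}}^2$ and $\chi(\calo_X)=\chi(\calo_{X_{\mathrm{can}}})$, the contraction $X\to X_{\mathrm{can}}$ being a crepant contraction of $(-2)$-curves.
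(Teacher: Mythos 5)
Your deduction is essentially the paper's own: the proof of \autoref{cor-main-1} derives \autoref{conj-1} from \autoref{thm-main} exactly as you describe ($\chi(\mathcal O_X)=\tfrac14K_X^2>0$ forces $K_X^2=4\chi(\mathcal O_X)<\tfrac92\chi(\mathcal O_X)$, part (1) then forces $q=2$, and part (2) gives the flat double cover of an abelian surface), and your ``if''-direction computation is the same one carried out inside \autoref{prop-step-III-i}. The one genuine gap is the ampleness of the branch divisor in the ``only if'' direction: you write that the equality characterization ``says precisely'' that the branch divisor is ample, but \autoref{thm-main}(2) asserts only that $R$ has at worst simple singularities and $K_Y\cdot R=4(q-2)$ (which is vacuous when $Y$ is abelian), not that $R$ is ample. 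The paper flags exactly this point and supplies the missing step in \autoref{lem-step-III-2}, via $R^2=2K_X^2>0$, the nefness of effective divisors on an abelian surface, the Hodge index theorem to exclude $R\cdot C=0$, and Nakai--Moishezon.

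That said, the gap is easily closed, and your own parenthetical remark in the ``if'' direction essentially contains a quicker argument than the paper's: since $X_{\mathrm{can}}$ is a canonical model, $\omega_{X_{\mathrm{can}}}=\pi^*(\omega_A\otimes L)=\pi^*L$ is ample, and ampleness is detected by pullback under the finite surjective morphism $\pi$, so $L$, and hence $B\equiv 2L$, is ample. If you make that observation explicit in the ``only if'' direction, your proof is complete and matches the paper's route, merely replacing \autoref{lem-step-III-2}'s intersection-theoretic argument for ampleness by the descent of ampleness along finite covers.
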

Here we recall that the branch divisor of a double cover is said to have at worst simple singularities
if the multiplicities of the singularities appearing in the process of the canonical resolution
are at most three (see \autoref{def-simple-sing}).
The above conjecture was confirmed by Manetti in \cite{manetti-03} under the assumption that $K_X$ is ample.

The next task in the study of the geography on irregular surfaces of general type and maximal Albanese dimension
is how the irregularity $q$ influences the relation between $K_X^2$ and $\chi(\mathcal O_X)$.
As a first attempt in this direction, Mendes Lopes and Pardini proved in \cite{lopes-pardini-11} that
$$K_X^2 \geq 4\chi(\mathcal O_X)+\frac{10}{3}q-8,\qquad\text{if $K_X$ is ample and $q\geq 5$}.$$
In \cite[Conjecture\,7.5]{manetti-03}, Manetti suggested the following inequality
\begin{conjecture}[Manetti]\label{conj-2}
Let $X$ be a minimal surface of general type and maximal Albanese dimension with $q\geq 4$. Then
\item \begin{equation}\label{eqn-conjectue-1}
K_X^2\geq 4\chi(\mathcal O_X)+4(q-3).
\end{equation}
\end{conjecture}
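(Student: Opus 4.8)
The plan is to split along the slope threshold $K_X^2=\tfrac92\chi(\mathcal O_X)$ that already governs the main theorem, and to treat the two ranges by different means. In the low-slope range $K_X^2<\tfrac92\chi(\mathcal O_X)$ the main theorem yields the stronger estimate $K_X^2\ge 4\chi(\mathcal O_X)+4(q-2)$, from which \eqref{eqn-conjectue-1} is immediate; so everything is concentrated in the high-slope range $K_X^2\ge\tfrac92\chi(\mathcal O_X)$, which I would first shrink by two elementary observations. Rewriting the slope hypothesis as $K_X^2-4\chi(\mathcal O_X)\ge\tfrac12\chi(\mathcal O_X)$ shows that \eqref{eqn-conjectue-1} holds whenever $\chi(\mathcal O_X)\ge 8(q-3)$; dually, since $\chi(\mathcal O_X)\le\tfrac29K_X^2$ in this range, one has $K_X^2-4\chi(\mathcal O_X)\ge\tfrac19K_X^2$, so \eqref{eqn-conjectue-1} also holds whenever $K_X^2\ge 36(q-3)$. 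Combining these with the known lower bound $\chi(\mathcal O_X)\ge q-2$ for surfaces of maximal Albanese dimension, the conjecture reduces to the bounded wedge
\begin{equation*}
\tfrac92\chi(\mathcal O_X)\le K_X^2<36(q-3),\qquad q-2\le\chi(\mathcal O_X)<8(q-3),
\end{equation*}
in which $\chi(\mathcal O_X)$ and $K_X^2$ are both forced to be small and essentially linear in $q$.

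The engine I would use to attack this wedge is the fibration-plus-isogeny machinery behind Pardini's proof of the Severi inequality, refined so as to retain the irregularity. Writing $a\colon X\to \Alb(X)$ for the Albanese map, one pulls $X$ back along the multiplication-by-$n$ isogenies of $\Alb(X)$ to obtain \'etale covers $X_n\to X$, builds on each $X_n$ a relatively minimal fibration $f\colon X_n\to B$ from a suitable pencil, and applies a slope inequality to $f$. The classical slope inequality $K_{X/B}^2\ge\tfrac{4(g-1)}{g}\deg f_*\omega_{X/B}$ has limit $K_X^2\ge4\chi(\mathcal O_X)$ only, since $\tfrac{4(g-1)}{g}\to 4$; what is needed is a version corrected by the relative irregularity $q_f=q-g(B)$ whose correction term is positive and survives the limit $n\to\infty$, thereby feeding the $4(q-3)$ contribution. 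Here the smallness of $\chi(\mathcal O_X)$ relative to $q$ in the wedge is the favourable feature: the identity $\chi(\mathcal O_X)=\deg f_*\omega_{X/B}+(g(F)-1)(g(B)-1)$ forces either a large relative irregularity $q_f$ or a fibration over a base of high genus, so the extremal surfaces are severely constrained and one can hope to compute the correction with the sharp constant producing the coefficient $4$.

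The hard part will be exactly the sharpness of this irregularity correction in the high-slope regime. The strongest general inequality of this type presently available, that of Mendes Lopes and Pardini in \cite{lopes-pardini-11}, gives only the coefficient $\tfrac{10}{3}$, i.e. $K_X^2\ge 4\chi(\mathcal O_X)+\tfrac{10}{3}q-8$, which implies \eqref{eqn-conjectue-1} only for $q\le 6$; raising the coefficient to the conjectural $4$ is the genuine obstacle, and it cannot be achieved by slope estimates that are sharp in $\deg f_*\omega_{X/B}$ alone, because the slope $\tfrac92$ is precisely where the classical bound stops being effective. I would therefore try to close the wedge not by a single inequality but by classification: the extremal configurations near the Severi line should be controlled by the same double-cover-of-abelian-surface structure that governs the equality case $K_X^2=4\chi(\mathcal O_X)$ of \autoref{conj-1}, so that a careful study of the Albanese image, the branch data, and the continuous rank of $\omega_X$ would pin down $\chi(\mathcal O_X)$ and $q$ and verify \eqref{eqn-conjectue-1} directly. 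Establishing a slope inequality simultaneously sharp in the fibre degree and in the relative irregularity, or equivalently carrying out this classification inside the bounded wedge, is the crux on which a complete proof of \autoref{conj-2} hinges.
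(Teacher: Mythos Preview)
The statement you are addressing is a \emph{conjecture}, and the paper does not prove it in full; it establishes only the partial result recorded in \autoref{cor-main-1}(2), namely that \eqref{eqn-conjectue-1} (in fact the stronger bound with $q-2$ in place of $q-3$) holds under the extra hypothesis $K_X^2\ge 36(q-2)$ or $\chi(\mathcal O_X)\ge 8(q-2)$. Your split along the threshold $K_X^2=\tfrac92\chi(\mathcal O_X)$, your use of \autoref{thm-main} in the low-slope range, and your two elementary reductions in the high-slope range are exactly how the paper derives \autoref{cor-main-1}(2) from \autoref{thm-main}; on the portion the paper actually proves, your approach and the paper's coincide.

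Where your proposal diverges from the paper is that you go on to discuss the residual bounded wedge $\tfrac92\chi(\mathcal O_X)\le K_X^2<36(q-3)$, $\chi(\mathcal O_X)<8(q-3)$. The paper makes no attempt to close this wedge, and your own proposal does not close it either: you correctly identify that one would need a slope inequality with a sharp irregularity correction (improving the $\tfrac{10}{3}$ of Mendes Lopes--Pardini to the conjectural $4$) or a classification of surfaces near the Severi line with $q$ large, and you are candid that this is ``the crux on which a complete proof \ldots\ hinges.'' That assessment is accurate; it is also where your proposal stops being a proof and becomes a programme. So there is no disagreement with the paper---both leave the full conjecture open---but you should not present this as a proof of \autoref{conj-2}.

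One minor point: the bound you quote as $\chi(\mathcal O_X)\ge q-2$ is usually stated as $p_g\ge 2q-4$, which gives $\chi(\mathcal O_X)\ge q-3$; this does not affect your reduction, since the lower bound on $\chi$ is used only to describe the wedge, not to eliminate it.
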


Our main result is the following.
\begin{theorem}\label{thm-main}
Let $X$ be a minimal surface of general type and maximal Albanese dimension.
Then
\begin{enumerate}
\item \begin{equation}\label{eqn-main-1}
K_X^2\geq \min\left\{\frac92\chi(\mathcal O_X),\,4\chi(\mathcal O_X)+4(q-2)\right\}.
\end{equation}
\item Assume that $K_X^2< \frac92\chi(\mathcal O_X)$. Then
\begin{equation}\label{eqn-main-2}
K_X^2=4\chi(\mathcal O_X)+4(q-2),
\end{equation}
if and only if the canonical model of $X$ is a flat double cover of $Y$ whose branch divisor $R$ has at worst simple singularities
and $K_Y\cdot R=4(q-2)$, where
$$
\left\{\begin{aligned}
&Y \text{~is an abelian surface}, &\quad& \text{if $q=2$};\\
&Y \cong E\times C \text{~with $g(E)=1$ and $g(C)=q-1$},&&\text{if $q\geq 3$}.
\end{aligned}\right.
$$
\end{enumerate}
\end{theorem}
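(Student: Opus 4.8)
The plan is to control $X$ through its Albanese morphism $a\colon X\to A:=\Alb(X)$, which by hypothesis is generically finite onto its image $Z:=a(X)$, an irreducible surface generating the abelian $q$-fold $A$. Applying Ueno's structure theorem to $Z$ separates the argument into three cases: \textbf{(i)} $q=2$, which forces $Z=A$; \textbf{(ii)} $q\ge 3$ and $Z$ of general type; \textbf{(iii)} $q\ge 3$ and $Z$ fibred, via a quotient $A\to A/T$ with $T$ an elliptic curve, over a smooth curve of genus $\ge 2$ whose fibres are translates of $T$. In case (ii) I will establish the stronger bound $K_X^2\ge\tfrac92\chi(\calo_X)$, so that in the situation of part~(2), where $K_X^2<\tfrac92\chi(\calo_X)$, only cases (i) and (iii) occur and carry the equality discussion.

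In case (i) we are in the Severi situation and $4(q-2)=0$, so \eqref{eqn-main-1} reads $K_X^2\ge 4\chi(\calo_X)$ and \eqref{eqn-main-2} is exactly \autoref{conj-1}. Here I would reprove $K_X^2\ge 4\chi(\calo_X)$ while bookkeeping the equality locus --- either by revisiting Pardini's limiting argument over the multiplication-by-$n$ isogenies $\mu_n\colon A\to A$, via the induced \'etale covers $X_n\to X$ (for which $K_{X_n}^2=n^{2q}K_X^2$ and $\chi(\calo_{X_n})=n^{2q}\chi(\calo_X)$) together with the slope inequality on the $X_n$, or by extending Manetti's analysis of the holomorphic $1$-forms on $X$ beyond the hypothesis that $K_X$ be ample --- and extract from the equality case that the canonical model of $X$ is a flat double cover of $A$ branched over an ample divisor with at worst simple singularities.

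Case (iii) produces the term $4(q-2)$. Composing $a$ with $A\to A/T$ and taking a Stein factorisation gives a fibration $f\colon X\to B$; since $X$ is minimal of general type it carries no fibration whose general fibre $F$ has genus $\le 1$, so $g:=g(F)\ge 2$, and a short computation with the Albanese shows that here the relative irregularity $q(X)-g(B)$ equals $1$, whence $g(B)=q-1$ and $q-2=g(B)-1$. Moreover $a|_F$ realises $F$ as a generically finite cover of the elliptic curve $T$, and projecting $A$ onto $T$ produces a second fibration $X\to T$ whose product with $f$ maps $X$ generically finitely onto $T\times B$. The point is then to prove a \emph{relative Severi inequality} for $f$, namely
\[
K_{X/B}^2\ \ge\ 4\deg f_*\omega_{X/B},
\]
which --- combined with $\deg f_*\omega_{X/B}=\chi(\calo_X)-(g-1)(g(B)-1)$ and $K_X^2=K_{X/B}^2+8(g-1)(g(B)-1)$ --- gives $K_X^2\ge 4\chi(\calo_X)+4(g-1)(q-2)\ge 4\chi(\calo_X)+4(q-2)$; for fibres of large genus this already follows from Xiao's inequality $K_{X/B}^2\ge\frac{4(g-1)}{g}\deg f_*\omega_{X/B}$, and for the remaining (small-$g$) fibrations one proves it by a limiting argument over \'etale covers in the $T$-direction, along which the fibre genus tends to infinity, in the spirit of Pardini's proof. (Should this fail in some borderline subcase, one instead lands in $K_X^2\ge\tfrac92\chi(\calo_X)$, which is harmless for part~(1).) Tracing equality in \eqref{eqn-main-2} backwards then forces $g=2$ (since $4(g-1)(q-2)\le 4(q-2)$ and $q\ge 3$) and the relative slope to be exactly $4$; the genus-$2$ hyperelliptic/bielliptic relative structure exhibits the canonical model of $X$ as the double cover $X\to T\times B$, and the self-intersection identity $K_X^2=4\chi(\calo_X)+4\bigl(L\cdot(T\times\{\mathrm{pt}\})\bigr)(q-2)$ for such a cover ($R=2L$) forces $R\cdot(T\times\{\mathrm{pt}\})=2$, hence $K_{T\times B}\cdot R=4(q-2)$, and forces $R$ to have at worst simple singularities; this is exactly the description in the statement, with $Y=E\times C$, $E=T$, $C=B$.

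It remains to prove $K_X^2\ge\tfrac92\chi(\calo_X)$ in case (ii), $Z$ of general type. Here I would use the positivity of $a^{*}K_Z$, equivalently of the moving part of $K_X$ --- bounding $p_g(X)$ from below by adjunction inside $A$ and running a slope estimate for the canonical pencil, or else invoking a known refinement of the Severi inequality in terms of the degree of the Albanese map --- after which \eqref{eqn-main-1} is immediate in this case and \eqref{eqn-main-2} cannot occur there. The genuine obstacle is the relative Severi inequality of case (iii): extracting the exact constant $4$ for \emph{all} admissible fibre genera --- in particular beating Xiao's bound for genus $2$ --- and then making the equality analysis rigid enough to recover the product $E\times C$ and the flat-double-cover description. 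This step is what is responsible for the dichotomy in \eqref{eqn-main-1} and for the shape of \autoref{thm-main}.
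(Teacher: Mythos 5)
Your overall flavor (limiting arguments over isogenies of $\Alb(X)$, a case division according to the structure of the Albanese image) is close in spirit to the literature, but your decomposition is not the paper's, and at each of your three cases the step you defer is precisely the open problem the paper was written to solve. The paper's single new structural input, which your proposal never produces, is an \emph{involution $\sigma$ on $X$ itself}: assuming $K_X^2<\tfrac92\chi(\calo_X)$, the Pardini pencils $f_n:\wt X_n\to\mathbb P^1$ are shown to be double cover fibrations by the sharp slope bound of \cite{lu-zuo-15a} (\autoref{thm-step-I}), and the resulting involution on $\wt X_p$ is descended to $X$ for large primes $p$ by a Sylow argument resting on Xiao's linear bound $|\Aut(X_p)|\le cK_{X_p}^2$ \cite{xiao-94} (\autoref{lem-step-II-2}); one then knows that $\Alb_X$ factors through the degree-two quotient $X\to Y=X/\langle\sigma\rangle$ (\autoref{thm-step-II}), and everything else is the canonical resolution of a double cover plus induction on $\deg(\Alb_X)$. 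Without this involution, your three cases reduce to the following unproved assertions. In case (i) ($q=2$) you propose to ``reprove $K_X^2\ge4\chi$ while bookkeeping the equality locus'' via Pardini's limit or by extending Manetti's $1$-form analysis: but Pardini's limiting argument is well known to lose all information at the boundary (which is exactly why \autoref{conj-1} stayed open after \cite{pardini-05}), and removing the ampleness hypothesis from \cite{manetti-03} is itself the hard point. In case (iii) the relative Severi inequality $K_{X/B}^2\ge4\deg f_*\omega_{X/B}$ with the constant $4$ for genus-$2$ fibres is far beyond Xiao's bound $\tfrac{4(g-1)}{g}=2$, and the \'etale-limiting argument you sketch in the $T$-direction would again, at best, yield the inequality without any handle on equality; moreover, knowing $g=2$ and slope $4$ does not by itself exhibit $X$ as a double cover of $T\times B$ (the hyperelliptic involution maps to a $\mathbb P^1$-bundle, not to $T\times B$; you need the bielliptic involution, i.e., again the $\sigma$ above).

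In case (ii) you assert that if the Albanese image $Z$ is of general type then $K_X^2\ge\tfrac92\chi(\calo_X)$, justified only by ``positivity of $a^*K_Z$'' or ``a known refinement of the Severi inequality in terms of $\deg(\Alb_X)$''. No such refinement is available here, and the paper does not prove (or need) this statement: it explicitly allows $K_{Y_0}^2<\tfrac92\chi(\calo_{Y_0})$ for a general-type quotient $Y_0$ and handles that branch by induction on $\deg(\Alb_X)$, using \autoref{lem-step-III-ii-1} to propagate the inequality up the tower and \autoref{lem-step-III-2} (a $\pi_1$ computation via Nori's theorem) to rule out towers of length $\ge2$ in the equality case. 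So the gap is not a technicality: the proposal is missing the mechanism (double-cover fibration $\Rightarrow$ involution on $X$ $\Rightarrow$ factorization of $\Alb_X$) that converts the hypothesis $K_X^2<\tfrac92\chi(\calo_X)$ into usable geometry, and each case of your plan silently assumes a statement of essentially the same depth as \autoref{thm-main} itself.
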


Based on the above result, we obtain a positive answer to \autoref{conj-1} and a partial support of \autoref{conj-2}.
\begin{corollary}\label{cor-main-1}
{\rm(1)} \autoref{conj-1} is true.

{\rm(2)} If $X$ is a minimal surface of general type and maximal Albanese dimension with
$K_X^2\geq 36(q-2)$ or $\chi(\mathcal O_X)\geq 8(q-2)$,
then \begin{equation}\label{eqn-cor-main-1}
K_X^2\geq 4\chi(\mathcal O_X)+4(q-2).
\end{equation}
In particular, \eqref{eqn-conjectue-1} holds in this case.
Moreover if $K_X^2> 36(q-2)$ or $\chi(\mathcal O_X)> 8(q-2)$,
then the equality in \eqref{eqn-cor-main-1} can hold
if and only if the canonical model of $X$ is a flat double cover of $Y$
 whose branch divisor $R$ has at worst simple singularities and $K_Y\cdot R=4(q-2)$, where $Y$ is the same as in \autoref{thm-main}(2).
\end{corollary}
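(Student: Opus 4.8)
The strategy is to derive the whole statement from \autoref{thm-main} by an elementary case analysis on the two quantities $\frac{9}{2}\chi(\mathcal O_X)$ and $4\chi(\mathcal O_X)+4(q-2)$, together with the classical formulas for the invariants of a flat double cover. Note at the outset that $X$ of general type gives $K_X^2>0$, so $K_X^2=4\chi(\mathcal O_X)$ forces $\chi(\mathcal O_X)>0$.

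\emph{Part (1).} For the ``only if'' direction, suppose $K_X^2=4\chi(\mathcal O_X)$. If $q\geq 3$, then since $\chi(\mathcal O_X)>0$ both $\frac{9}{2}\chi(\mathcal O_X)$ and $4\chi(\mathcal O_X)+4(q-2)$ strictly exceed $4\chi(\mathcal O_X)=K_X^2$, contradicting \autoref{thm-main}(1); hence $q=2$. Then $K_X^2=4\chi(\mathcal O_X)<\frac{9}{2}\chi(\mathcal O_X)$ and $4(q-2)=0$, so \autoref{thm-main}(2) identifies the canonical model $X_{\mathrm{can}}$ with a flat double cover $\pi\colon X_{\mathrm{can}}\to A$ of an abelian surface whose branch divisor $R$ has at worst simple singularities (the condition $K_A\cdot R=0$ being automatic, as $K_A\equiv 0$). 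It remains to check $R$ is ample: writing $R\in|2L|$, the double cover formula gives $\omega_{X_{\mathrm{can}}}\cong\pi^*(\omega_A\otimes L)=\pi^*L$; since $X_{\mathrm{can}}$ is a canonical model this line bundle is ample, and as $\pi$ is finite surjective this forces $L$, hence $R=2L$, to be ample. For the ``if'' direction, let $X_{\mathrm{can}}\to A$ be a flat double cover of an abelian surface with $R\in|2L|$ ample and having at worst simple singularities. By \autoref{def-simple-sing} the cover has at worst rational double points, so its minimal resolution $X$ is a minimal surface with $K_X^2=K_{X_{\mathrm{can}}}^2$ and $\chi(\mathcal O_X)=\chi(\mathcal O_{X_{\mathrm{can}}})$; it is of general type since $\omega_{X_{\mathrm{can}}}=\pi^*L$ is ample, and of maximal Albanese dimension since it dominates the abelian surface $A$. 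The double cover formulas, using $K_A\equiv 0$ and $\chi(\mathcal O_A)=0$, give $K_{X_{\mathrm{can}}}^2=2(K_A+L)^2=2L^2$ and $\chi(\mathcal O_{X_{\mathrm{can}}})=2\chi(\mathcal O_A)+\frac{1}{2}L\cdot(K_A+L)=\frac{1}{2}L^2$, whence $K_X^2=4\chi(\mathcal O_X)$.

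\emph{Part (2).} For the inequality: if $\chi(\mathcal O_X)\geq 8(q-2)$, then $\frac{9}{2}\chi(\mathcal O_X)\geq 4\chi(\mathcal O_X)+4(q-2)$, so the minimum in \autoref{thm-main}(1) is $4\chi(\mathcal O_X)+4(q-2)$ and \eqref{eqn-cor-main-1} follows; otherwise $\chi(\mathcal O_X)<8(q-2)$, so by hypothesis $K_X^2\geq 36(q-2)=32(q-2)+4(q-2)>4\chi(\mathcal O_X)+4(q-2)$, and again \eqref{eqn-cor-main-1} follows. Then \eqref{eqn-conjectue-1} is immediate. For the equality statement, assume in addition $K_X^2=4\chi(\mathcal O_X)+4(q-2)$ and ($K_X^2>36(q-2)$ or $\chi(\mathcal O_X)>8(q-2)$); in both cases $\chi(\mathcal O_X)>8(q-2)$ (in the first, $4\chi(\mathcal O_X)=K_X^2-4(q-2)>32(q-2)$), hence $K_X^2=4\chi(\mathcal O_X)+4(q-2)<\frac{9}{2}\chi(\mathcal O_X)$, so \autoref{thm-main}(2) gives the asserted double cover description. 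Conversely, if $X_{\mathrm{can}}$ is a flat double cover of $Y$ (abelian if $q=2$; $E\times C$ with $g(E)=1$, $g(C)=q-1$ if $q\geq3$) with $R\in|2L|$ of at worst simple singularities and $K_Y\cdot R=4(q-2)$, then $\chi(\mathcal O_Y)=0$ and $K_Y^2=0$, so $K_Y\cdot L=\frac{1}{2}K_Y\cdot R=2(q-2)$ and the double cover formulas give $K_X^2=2(K_Y+L)^2=2L^2+8(q-2)$ and $\chi(\mathcal O_X)=\frac{1}{2}L\cdot(K_Y+L)=\frac{1}{2}L^2+(q-2)$, whence $K_X^2=4\chi(\mathcal O_X)+4(q-2)$.

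The argument needs no genuinely new input beyond \autoref{thm-main}; I expect the only points requiring real care to be the ampleness of $R$ in the $q=2$ case and, above all, verifying through \autoref{def-simple-sing} that ``simple singularities'' on the branch divisor produce exactly rational double points on the double cover, so that the surfaces under discussion are genuinely minimal of general type and the classical double cover formulas for $K^2$ and $\chi$ apply without correction terms.
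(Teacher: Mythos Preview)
Your proof is correct and follows essentially the same approach as the paper, which simply says the corollary follows from \autoref{thm-main} together with the ampleness of the branch divisor. The one minor difference is that for this ampleness the paper invokes \autoref{lem-step-III-2} (Hodge index on the abelian surface), whereas you argue more directly that $\omega_{X_{\mathrm{can}}}=\pi^*L$ is ample on the canonical model and $\pi$ is finite, forcing $L$ and hence $R$ to be ample; both arguments are valid.
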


The main idea of the proof is as follows.
\begin{enumerate}
\item[Step 1] We apply Pardini's Albanese lifting technique in \cite{pardini-05} verbatim to construct a sequence of fibred surfaces $f_n:\,\wt X_n \to \mathbb P^1$
such that the slope $\lambda_{f_n}$ converges to $K_X^2/\chi(\mathcal O_X)$ when $n$ tends to infinity.
With the help of \cite[Theorem\,3.1]{lu-zuo-15a}, the proof of \autoref{thm-main} is then reduced to the case where $f_n$'s are all double cover fibrations (cf. \cite[Definition\,1.4]{lu-zuo-15a}) for $n$ sufficiently large.

\item[Step 2] We make use of Xiao's linear bound on the order of the automorphism group of
              surfaces of general type (cf. \cite{xiao-94}) to show that the involution on $\wt X_n$ induces a
              well-defined involution $\sigma$ on $X$ provided that $n=p$ is prime and large enough. Moveover, it is shown that the Albanese map of $X$ factors through $X/\langle\sigma\rangle$.

\item[Step 3] We investigate irregular surfaces of general type and maximal Albanese dimension whose Albanese map factors through a double cover, and then apply induction on the degree of the Albanese map to finish the proof of \autoref{thm-main}.
\end{enumerate}

The above three steps are completed in sections \ref{sec-step-I}, \ref{sec-step-II} and \ref{sec-step-III} respectively.
One of the main advantages of Pardini's Albanese lifting technique, compared to
those in \cite{manetti-03,lopes-pardini-11}, is that the canonical divisor of $X$ does not have to be ample.
This allows us to deal with a large class of surfaces.
We remark that the high dimensional Severi inequality has been recently obtained independently by
Barja \cite{barja-15} and Zhang \cite{zhang-14},
where Pardini's Albanese lifting technique is also crucial to both of their proofs.

{\bf Notations and conventions:}
We work over the complex number. The surface $X$ is a minimal smooth surface of general type and maximal Albanese dimension
with irregularity $q\geq2$. The canonical divisor of $X$ is denoted by $K_X$,
and $\chi(\mathcal O_X)$ is referred to be the characteristic of the structure sheaf of $X$.
The Albanese variety of $X$ is denoted by $\Alb(X)$ and $\Alb_X:\,X \to \Alb(X)$ is the Albanese map.
The surface $X$ is said to be of maximal Albanese dimension if the dimension of the image $\Alb_X(X)$ is $2$.

{\bf Acknowledgements:}
When our paper was prepared, Barja kindly informed us that \autoref{conj-1} was also proved in his joint work \cite{barja-pardini-stoppino-15}
with Pardini and Stoppino using different techniques.

\section{Construction of fibred surfaces}\label{sec-step-I}
In the famous paper \cite{pardini-05}, Pardini introduced a clever Albanese lifting technique to construct a sequence of fibred surfaces $f_n:\,\wt X_n \to \mathbb P^1$
such that the slope $\lambda_{f_n}$ converges to $K_X^2/\chi(\mathcal O_X)$ when $n$ tends to infinity.
For readers' convenience, we recall a brief formulation of the construction.

Let $\Alb_X:\,X \to \Alb(X)$ be the Albanese map of $X$.
Let $L$ be a very ample line bundle on $\Alb(X)$ and $H=(\Alb_X)^*(L)$.
For any integer $n\geq 2$, let $\mu_n:\,\Alb(X) \to \Alb(X)$ be the multiplication by $n$ and consider the following Cartesian diagram:
$$\xymatrix{
X_n \ar[rr]^-{\nu_n}\ar[d]_-{a_n}&&X\ar[d]^-{\Alb_X}\\
\Alb(X) \ar[rr]^-{\mu_n}&& \Alb(X)}$$
Then $X_n$ is a smooth surface of maximal Albanese dimension with the following invariants
$$K_{X_n}^2=n^{2q}K_X^2,\qquad \chi(\mathcal O_{X_n})=n^{2q}\chi(\mathcal O_X).$$
According to \cite[\S\,2, Proposition\,3.5]{lange-birkenhake-92}, we have the following numerical equivalence on $\Alb(X)$:
$$\mu_n^*(L) \simn n^2L.$$
It follows that
$$H_n:=a_n^*(L) \simn n^{-2}\nu_n^*(H).$$
Note that $\deg \nu_n=\deg \mu_n = n^{2q}$. Hence
$$H_n^2=n^{2q-4}H^2,\qquad K_{X_n}\cdot H_n=n^{2q-2}K_X\cdot H.$$
Now let $D_1, D_2 \in |H_n|$ be two general smooth curves, $C_1 = D_1 + D_2$ and
$C_2 \in |2H_n|$ be a general curve, such that $C_2$ is smooth and $C_1$ and $C_2$
intersect transversely at $(2H_n)^2$ points. Let $\wt X_n \to X_n$ be the blow up of
the intersection points of $C_1$ and $C_2$, so that we get a (relatively minimal) fibration $f_n:\, \wt X_n \to \mathbb P^1$ of genus $g_n$
with the following invariants:
\begin{eqnarray*}
g_n&=&1+n^{2q-2}K_X\cdot H +2n^{2q-4}H^2,\\[0.1cm]
K_{f_n}^2 &=& n^{2q}K_X^2+8n^{2q-2}K_X\cdot H+12n^{2q-4}H^2,\\[0.1cm]
\chi_{f_n} &=& n^{2q}\chi(\mathcal O_X)+n^{2q-2}K_X\cdot H +2n^{2q-4}H^2.
\end{eqnarray*}
Hence we obtain a sequence of fibred surfaces $f_n:\,\wt X_n \to \mathbb P^1$ with slopes
$$\lambda_{f_n}=\frac{K_{f_n}^2}{\chi_{f_n}}
=\frac{K_X^2+8n^{-2}K_X\cdot H+12n^{-4}H^2}{\chi(\mathcal O_X)+n^{-2}K_X\cdot H +2n^{-4}H^2}
\lra \frac{K_X^2}{\chi(\mathcal O_X)}, \qquad \text{as~}n \lra +\infty.$$

\begin{theorem}\label{thm-step-I}
Let $f_n$'s be the fibrations constructed as above. Then either
\begin{equation}\label{eqn-double-1}
K_X^2 \geq \frac{9}{2}\chi(\mathcal O_X),
\end{equation}
or the fibration $f_n$ is a double cover fibration (cf. \cite[Definition\,1.4]{lu-zuo-15a}) for any sufficiently large $n$.
\end{theorem}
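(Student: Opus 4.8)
The plan is to apply the slope inequality machinery of \cite{lu-zuo-15a} to the sequence $f_n$. Recall that \cite[Theorem\,3.1]{lu-zuo-15a} provides a lower bound for the slope $\lambda_f$ of a relatively minimal fibration of genus $g\geq 2$ that is \emph{not} a double cover fibration, and that this bound is strictly larger than $4$; the precise constant in that reference should be $\tfrac92$ (this is exactly the threshold appearing in \eqref{eqn-double-1}). So the first step is to quote that result: if $f_n$ is not a double cover fibration, then $\lambda_{f_n}\geq \tfrac92$. The second step is to pass to the limit. Since we computed above that
\[
\lambda_{f_n}=\frac{K_X^2+8n^{-2}K_X\cdot H+12n^{-4}H^2}{\chi(\mathcal O_X)+n^{-2}K_X\cdot H+2n^{-4}H^2}\lra \frac{K_X^2}{\chi(\mathcal O_X)},
\]
if $f_n$ fails to be a double cover fibration for infinitely many $n$, then taking the limit along that subsequence gives $K_X^2/\chi(\mathcal O_X)\geq \tfrac92$, i.e. \eqref{eqn-double-1}. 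The contrapositive is precisely the dichotomy in the statement: either \eqref{eqn-double-1} holds, or $f_n$ is a double cover fibration for all $n$ large enough.

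Before the limiting argument can be run, one must check the hypotheses under which \cite[Theorem\,3.1]{lu-zuo-15a} applies: the fibration $f_n$ must be relatively minimal (it is, by construction, since we blew up exactly the base points of the pencil) and the fibre genus $g_n$ must be at least $2$. From the formula $g_n=1+n^{2q-2}K_X\cdot H+2n^{2q-4}H^2$ and the fact that $H=(\Alb_X)^*L$ with $L$ very ample (so $K_X\cdot H>0$ and $H^2>0$, using that $X$ has maximal Albanese dimension), we get $g_n\to\infty$, so this is automatic for $n$ large. One should also record that the construction genuinely produces a fibration over $\mathbb P^1$ — the curves $D_1+D_2$ and $C_2$ form a pencil in $|2H_n|$ with smooth general member and transverse base points — which is already set up in the paragraph preceding the theorem.

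The only real subtlety is making sure the invocation of \cite[Theorem\,3.1]{lu-zuo-15a} is clean: that theorem gives a slope bound for fibrations that are not double cover fibrations, and one wants the bound to be a constant independent of $n$ (which it is, being a universal constant $\tfrac92$, not something depending on $g_n$ or on $X$). If instead that reference's bound degrades with $g$, one would need a slightly more careful argument, but as the threshold $\tfrac92$ is exactly what appears here, I expect the bound to be exactly the clean universal one and the argument to be a two-line limiting argument. Thus the heart of the matter is entirely offloaded onto \cite[Theorem\,3.1]{lu-zuo-15a}; the present theorem is essentially a formal consequence of it combined with the slope computation already carried out.
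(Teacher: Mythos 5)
Your approach is the same as the paper's: reduce to \cite[Theorem\,3.1]{lu-zuo-15a} and take limits using the slope computation. The one correction needed is your guess about the cited threshold: \cite[Theorem\,3.1(ii)]{lu-zuo-15a} does not give a universal constant $\tfrac92$ for non--double-cover fibrations, but the genus-dependent bound $\tfrac{18(g-1)}{4g+3}$, which is strictly below $\tfrac92$ for every finite $g$. This is exactly the contingency you flagged, and it does not break your argument: since you already observed $g_n\to\infty$, the threshold $\tfrac{18(g_n-1)}{4g_n+3}$ tends to $\tfrac92$, so passing to the limit along a subsequence of $n$ for which $f_n$ is not a double cover fibration still yields $K_X^2/\chi(\mathcal O_X)\geq\tfrac92$. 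The paper runs the same comparison in quantified form, choosing $N_1$ with $\lambda_{f_n}\leq\tfrac92-\tfrac{3}{8\chi(\mathcal O_X)}$ (using that $K_X^2<\tfrac92\chi(\mathcal O_X)$ forces $K_X^2\leq\tfrac92\chi(\mathcal O_X)-\tfrac12$ by integrality) and $N_2$ with $\tfrac{18(g_n-1)}{4g_n+3}\geq\tfrac92-\tfrac{1}{4\chi(\mathcal O_X)}$, so that for $n\geq\max\{N_1,N_2\}$ the slope falls strictly below the threshold and $f_n$ must be a double cover fibration.
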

\begin{proof}
Assume that $K_X^2 < \frac{9}{2}\chi(\mathcal O_X)$, i.e., $K_X^2 \leq \frac{9}{2}\chi(\mathcal O_X)-\frac12$.
We have to show that $f_n$ is a double cover fibration if $n$ is sufficiently large.

As $$\lambda_{f_n} \lra \frac{K_X^2}{\chi(\mathcal O_X)}, \qquad \text{as~}n \lra +\infty,$$
there exists $N_1$ such that
\begin{equation}\label{eqn-step-I-1}
\lambda_{f_n} \leq \frac{K_X^2}{\chi(\mathcal O_X)}+\frac{1}{8\chi(\mathcal O_X)}
\leq \frac92-\frac{3}{8\chi(\mathcal O_X)},\qquad \forall~n\geq N_1.
\end{equation}
On the other hand, note that
$$\frac{18(g_n-1)}{4g_n+3}=\frac{18\big(K_X\cdot H+2n^{-2}H^2\big)}{4K_X\cdot H+8n^{-2}H^2+7n^{2-2q}}
\lra \frac92, \qquad \text{as~}n \lra +\infty.$$
Hence there exists $N_2$ such that
$$\frac{18(g_n-1)}{4g_n+3}\geq \frac92-\frac{1}{4\chi(\mathcal O_X)},
\qquad \forall~n\geq N_2.$$
According to \cite[Theorem\,3.1(ii)]{lu-zuo-15a}, it follows that $f_n$ must be a double cover fibration once $n\geq \max\{N_1,N_2\}$.
The proof is complete.
\end{proof}
\section{Factorization of the Albanese map}\label{sec-step-II}
The main purpose of this section is to prove the following theorem on the factorization of the Albanese map.
\begin{theorem}\label{thm-step-II}
Let $X$ be a surface of maximal Albanese dimension with $K_X^2 < \frac{9}{2}\chi(\mathcal O_X)$.
Then there exists an involution $\sigma$ on $X$ such that
the Albanese map $\Alb_X$ factors through a double cover $\pi:\,X \to Y:=X/\langle\sigma\rangle$,
i.e., the following diagram commutes:
$$\xymatrix{X \ar[rr]^-{\pi} \ar@/_5mm/"1,5"_-{\Alb_X}  && Y=X/\langle\sigma\rangle \ar[rr] &&\Alb_X(X)}$$
\end{theorem}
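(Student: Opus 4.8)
The plan is to run Pardini's tower $\nu_n\colon X_n\to X$ in tandem with the double cover structure of \autoref{thm-step-I}, and then to push the fibration involution down through $\nu_n$ by a counting argument built on Xiao's bound on automorphism groups. First, since $K_X^2<\frac92\chi(\mathcal O_X)$, \autoref{thm-step-I} furnishes, for all large $n$, an involution $\tau_n\in\Aut(\wt X_n)$ realising $f_n$ as a double cover fibration. As $X_n$ is étale over the minimal surface of general type $X$, it is itself minimal of general type and is the unique minimal model of $\wt X_n$; since a birational selfmap of a surface of general type is biregular on its minimal model, $\tau_n$ induces an involution $\bar\tau_n\in\Aut(X_n)$, with $\bar\tau_n\neq\mathrm{id}$ because $\tau_n\neq\mathrm{id}$.

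\emph{The Sylow step.} Let $G_n\cong(\mathbb Z/n\mathbb Z)^{2q}$ be the Galois group of $\nu_n$, regarded inside $\Aut(X_n)$, and set $\Gamma_n:=\langle G_n,\bar\tau_n\rangle$. Xiao's linear bound gives $|\Aut(X_n)|\le c_0K_{X_n}^2=c_0\,n^{2q}K_X^2$ for an absolute constant $c_0$, hence $[\Gamma_n:G_n]\le c_0K_X^2=:C$, a bound independent of $n$. Take $n=p$ prime with $p>\max\{C,2\}$. Then $p\nmid[\Gamma_p:G_p]$, so $G_p$ is a Sylow $p$-subgroup of $\Gamma_p$; by Sylow's theorem the number of such subgroups is $\equiv1\pmod p$ and divides $[\Gamma_p:G_p]<p$, hence equals $1$, so $G_p$ is normal in $\Gamma_p$. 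Since $p$ is odd, $G_p$ contains no involution, so $\bar\tau_p\notin G_p$ and its image $\sigma\in\Gamma_p/G_p$ has order $2$. As $G_p$ is normal in $\Gamma_p$, the group $\Gamma_p/G_p$ acts on $X=X_p/G_p$, so $\sigma$ is an involution on $X$ satisfying $\nu_p\circ\bar\tau_p=\sigma\circ\nu_p$; with $Y:=X/\langle\sigma\rangle=X_p/\Gamma_p$ we obtain the double cover $\pi\colon X\to Y$.

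\emph{Reducing the factorization.} Since $\nu_p$ is the pullback of $\mu_p$, Pardini's construction gives $\Alb(X_p)=\Alb(X)=:A$ with $a_p:=\Alb_{X_p}$ the second projection, $\Alb_X\circ\nu_p=\mu_p\circ a_p$, and $G_p$ acting on $A$ by translations by $A[p]$; in particular $G_p$ acts trivially on $H^0(\Omega^1_{X_p})$. Writing the action of $\bar\tau_p$ on $A$ as $x\mapsto Mx+b$, a direct comparison of $\Alb_X\circ\sigma$ with $\Alb_X$ shows (using that $p$ is odd) that $\Alb_X$ factors through $\pi$ if and only if $\bar\tau_p$ acts trivially on $A$; and since $\bar\tau_p$ descends from the covering involution of the ramified double cover $\wt X_p\to\wt X_p/\langle\tau_p\rangle$ it has a fixed point, so this holds if and only if $M=\mathrm{id}$, i.e.\ if and only if $q\bigl(\wt X_p/\langle\tau_p\rangle\bigr)=q$. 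Thus the theorem is reduced to this last equality.

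\emph{The main obstacle.} The crux is precisely to rule out nonzero anti-invariant holomorphic $1$-forms for $\tau_p$. I expect a proof by contradiction: such a form would produce a nonconstant morphism from $\wt X_p$ onto a positive-dimensional abelian variety on which $\tau_p$ acts by $-1$, and this must be reconciled with the precise shape of the double cover fibration (the involution $\tau_p$ being vertical over $\mathbb P^1$), with the maximal Albanese dimension of $\wt X_p$, and with the slope bound $\lambda_{f_p}<\frac92$ from \autoref{thm-step-I}. Concretely, for the covers arising here — whose branch divisor is controlled by the big and nef divisor $H_p=a_p^{*}L$ — the anti-invariant part of $H^1(\mathcal O_{\wt X_p})$ is forced to vanish. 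Everything else above is formal; this positivity and structure input is where the real difficulty lies.
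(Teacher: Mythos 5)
Your Sylow argument is essentially the paper's \autoref{lem-step-II-2}, and your observation that $\bar\tau_p\notin G_p$ because $G_p$ has odd order is a correct (and slightly more careful) version of that step. But the heart of the theorem is the statement you defer at the end, and deferring it leaves a genuine gap: you reduce the factorization of $\Alb_X$ to the claim that the involution acts trivially on $\Alb(X_p)$, declare this ``the main obstacle,'' and then only speculate that it should follow from ruling out anti-invariant holomorphic $1$-forms via some positivity of the branch divisor. No such argument is carried out, and the sketch you give (producing a map to an abelian variety on which $\tau_p$ acts by $-1$ and ``reconciling'' it with the slope bound) is not a proof. As written, the proposal establishes the existence of an involution $\sigma$ on $X$ but not that $\Alb_X\circ\sigma=\Alb_X$, which is the actual content of \autoref{thm-step-II}.

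The paper closes this gap with a short geometric argument (\autoref{lem-step-II-1}) that has nothing to do with anti-invariant forms: since $f_n\circ\tilde\sigma_n=f_n$, the involution $\sigma_n$ maps each member of the pencil $\Lambda\subseteq|2H_n|$ defining $f_n$ to itself. But $H_n=a_n^*(L)$ with $L$ very ample on $\Alb(X)$, so if a general point $x$ had $a_n(\sigma_n(x))\neq a_n(x)$, then $|2H_n|$ --- and, by the generality of $D_1,D_2,C_2$, the pencil $\Lambda$ itself --- would separate $x$ from $\sigma_n(x)$, contradicting $\sigma_n(C_x)=C_x$ for the member $C_x\in\Lambda$ through $x$. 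Hence $a_p\circ\sigma_p=a_p$, and the identity $\Alb_X\circ\sigma\circ\nu_p=\mu_p\circ a_p\circ\sigma_p=\mu_p\circ a_p=\Alb_X\circ\nu_p$ gives the factorization. I would encourage you to notice that the fibre-preserving property of $\tau_n$, combined with the fact that the pencil is pulled back from a very ample system on the Albanese variety, already forces the compatibility you were trying to extract from Hodge-theoretic considerations; also note that your side remark that $\tau_p$ must have a fixed point (the double cover could a priori be \'etale) is both unjustified and unnecessary once this lemma is in place.
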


In order to prove \autoref{thm-step-II}, we may assume that $K_X^2 < \frac{9}{2}\chi(\mathcal O_X)$ in this section.
Hence by \autoref{thm-step-I}, the fibration $f_n$ constructed in the last section
is a double cover fibration for any sufficiently large $n$, i.e.,
there exists an involution $\tilde \sigma_n$ on $\wt X_n$ such that $f_n\circ\tilde\sigma_n=f_n$.
We always assume that $n$ is large enough in this section such that $f_n$ is a double cover fibration.
Remark however that there might be more than one involution on $\wt X_n$, in which case we choose and fix one.
Since $X_n$ is the minimal model of $\wt X_n$, $\tilde \sigma_n$ induces an involution $\sigma_n$ on $X_n$.
First we have the following easy lemma.
\begin{lemma}\label{lem-step-II-1}
Let $a_n:\,X_n \to \Alb(X)$ be the map constructed in \autoref{sec-step-I}. Then
$a_n\circ\sigma_n=a_n$.
\end{lemma}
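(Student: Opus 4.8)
The plan is to show that $a_n \circ \sigma_n$ and $a_n$ both arise as the Albanese-type map attached to the same geometric data, and hence agree up to translation; then to pin down the translation using a fixed point or a degree-counting argument. First I would recall that $a_n\colon X_n\to\Alb(X)$ was defined via the Cartesian square with $\mu_n$, so $a_n$ is (a model for) the Albanese map of $X_n$ composed with an isogeny; in particular $a_n^*$ is injective on $H^0(\Alb(X),\Omega^1)$ and the image $a_n(X_n)$ generates $\Alb(X)$ as a group. The key point is that $\tilde\sigma_n$ was produced as the deck involution of a double cover fibration $f_n\colon\wt X_n\to\mathbb P^1$, and this fibration was built from the pencil spanned by $C_1=D_1+D_2$ and $C_2$ with $D_i\in|H_n|$, $C_2\in|2H_n|$, where $H_n=a_n^*(L)$; I would argue that this linear-algebra data is $\tilde\sigma_n$-invariant by construction, so $\tilde\sigma_n$ (hence $\sigma_n$ on the minimal model $X_n$) acts on the relevant sections and cohomology in a way compatible with $a_n$.

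Concretely, the cleanest route is via one-forms. Let $V=a_n^*H^0(\Alb(X),\Omega^1_{\Alb(X)})\subseteq H^0(X_n,\Omega^1_{X_n})$; this is a $2q$-dimensional subspace whose associated map to the Albanese (after fixing a base point) is exactly $a_n$. I would show $\sigma_n^*$ preserves $V$. For this, observe that the fibration $f_n$ is pulled back (after the blow-ups $\wt X_n\to X_n$) from the pencil on $X_n$ generated by the two divisors in $|H_n|$ and $|2H_n|$; since $f_n\circ\tilde\sigma_n=f_n$, the involution preserves every fibre, so it preserves the sub-line-bundle $H_n$ up to the pencil, whence $\sigma_n^*$ stabilises (a space closely tied to) $H^0(X_n,H_n)$ and more importantly fixes the class $H_n=a_n^*L$ in $\mathrm{Pic}$. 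Because $L$ is very ample and the Albanese map of an abelian variety is essentially determined by an ample class together with the group structure, invariance of $H_n$ under $\sigma_n$ forces $a_n\circ\sigma_n$ to equal $a_n$ composed with an automorphism of $\Alb(X)$ that fixes the polarisation $L$ — i.e. a translation (since $\Alb(X)$ is an abelian variety and $L$ is a principal-type polarisation after replacing $L$ by a suitable multiple, or more simply because the linear action is forced to be trivial on $H^0(\Omega^1)$ by considering that $\sigma_n$ has finite order and the quotient must still be of maximal Albanese dimension). I would then kill the translation: $\sigma_n$ is an involution, so if $a_n\circ\sigma_n = t\circ a_n$ for a translation $t$, then applying $\sigma_n$ twice gives $t^2=\mathrm{id}$ on the subgroup generated by $a_n(X_n)$, which is all of $\Alb(X)$; combined with the fact that $\mu_n$ is the multiplication by $n$ and the original Albanese map $\mathrm{Alb}_X$ has no such ambiguity, one deduces $t=\mathrm{id}$ — for instance, $\nu_n$-equivariance forces $t$ to be a point of order dividing $2$ that also lies in the kernel of $\mu_n$, and a direct check (or choosing $n$ odd, as one may since only large $n$ matter) eliminates it.

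The step I expect to be the main obstacle is the translation-killing at the end: showing the residual translation $t$ is trivial rather than merely $2$-torsion. I anticipate the honest argument runs through the compatibility with $\nu_n\colon X_n\to X$ and the fact that $\mathrm{Alb}_X$ itself is canonical (no translation ambiguity once a base point is fixed), so $\nu_n\circ\sigma_n$ and $\nu_n$ differ by a deck transformation of $\mu_n$, i.e. translation by an $n$-torsion point; intersecting with the $2$-torsion constraint and using that $n$ can be taken coprime to $2$ forces $t=0$. Everything else — invariance of $V$, the reduction to a polarisation-preserving automorphism, finiteness of $\langle\sigma_n\rangle$ — is routine once the pencil-invariance of $\tilde\sigma_n$ is spelled out, which is immediate from $f_n\circ\tilde\sigma_n=f_n$ and the construction of $f_n$ from $|H_n|$ and $|2H_n|$.
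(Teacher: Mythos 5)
Your proposal takes a genuinely different route from the paper, but as written it has a real gap at precisely the step you flag as ``the main obstacle''. To kill the residual translation $t$ you invoke the claim that $\nu_n\circ\sigma_n$ and $\nu_n$ differ by a deck transformation of $\mu_n$, i.e.\ that $\sigma_n$ is compatible with the covering $\nu_n:\,X_n\to X$. At this point in the paper nothing of the sort is known: the statement that $\sigma_n$ normalises the Galois group $G_n$ of $\nu_n$ (equivalently, descends to an involution of $X$) is exactly the content of \autoref{lem-step-II-2}, and its proof is nontrivial --- it needs $n=p$ a large prime, Xiao's linear bound on $|\Aut|$, and a Sylow argument. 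So your translation-killing step is circular relative to the paper's logical order, and I do not see an independent way to rule out a $2$-torsion translation from the data you use (the involution need not have fixed points a priori). A second, softer gap is earlier: you need $\sigma_n^*$ to preserve the $2q$-dimensional subspace $V=a_n^*H^0(\Alb(X),\Omega^1)$ \emph{and} to act on it as the identity, but the argument offered (invariance of the class of $H_n$, plus ``the quotient is still of maximal Albanese dimension'') does not deliver this: fixing an ample class in $\Pic$ does not control the action on $H^0(\Omega^1)$, and maximal Albanese dimension of the quotient only constrains the invariant part of $H^0(\Omega^1_{X_n})$, not the action on all of $V$.

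For contrast, the paper's proof is entirely elementary and bypasses Albanese functoriality: for a general $x$ with $y=a_n(x)\neq y'=a_n(\sigma_n(x))$, very ampleness of $L$ shows $|H_n|$ (hence $|2H_n|$, hence the general pencil $\Lambda$) separates $x$ from $\sigma_n(x)$, so $\sigma_n(x)$ does not lie on the member $C_x$ of $\Lambda$ through $x$; but $f_n\circ\tilde\sigma_n=f_n$ forces $\sigma_n(C_x)=C_x$, so $\sigma_n(x)\in C_x$, a contradiction. If you want to salvage your approach, you would need to prove the invariance and triviality of the action on $V$ directly and find a translation-killing argument that does not presuppose descent of $\sigma_n$ to $X$; the pencil-separation trick is the mechanism the paper uses to avoid both issues at once.
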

\begin{proof}
Let $x\in X_n$ be a general point, $x'=\sigma_n(x)$, $y=a_n(x)$, and $y'=a_n(x')$.
Then it suffices to prove $y=y'$.
Suppose on the contrary $y\neq y'$.
Let $\Lambda$ be the pencil generated by $C_1=D_1+D_2$ and $C_2$ in \autoref{sec-step-I}.
Since $L$ is very ample on $\Alb(X)$, the complete linear system $|L|$ (hence also $|2L|$)
separates the two points $y$ and $y'$, from which it follows that $|H_n|$ (hence also $|2H_n|$)
separates the two points $x$ and $x'$.
According to the generality of $D_1,D_2$ and $C_2$ in the construction,
one obtains that $\Lambda$ also separates the two points $x$ and $x'$,
which means that $x'\not \in C_x$ and $x\not\in C_{x'}$, where $C_x$ (resp. $C_{x'}$) is the member of $\Lambda$ containing $x$ (resp. $x'$).
Because $f_n\circ\tilde\sigma_n=f_n$, it follows that $\sigma_n(C)=C$ for any member $C\in \Lambda$.
In particular, $\sigma_n(C_x) = C_x$, from which it follows that $x'=\sigma_n(x)\in \sigma_n(C_x) = C_x$.
This gives a contradiction. The proof is complete.
\end{proof}

The next lemma is key to the proof of \autoref{thm-step-II}, which relies highly on Xiao's linear bound
on the order of the automorphism group of surfaces of general type (cf. \cite{xiao-94}).
\begin{lemma}\label{lem-step-II-2}
If $n=p$ is a prime number and sufficiently large, then $\sigma_p$ induces a well-defined involution $\sigma$ on $X$,
i.e., one has the following commutative diagram
$$\xymatrix{
X_p \ar[rr]^-{\nu_p} \ar[d]_-{\sigma_p} && X \ar[d]^-{\sigma}\\
X_p \ar[rr]^-{\nu_p} && X
}$$
\end{lemma}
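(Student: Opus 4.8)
The plan is to combine the étale–Galois structure of $\nu_p$ with Xiao's linear bound on the order of the automorphism group of a surface of general type. First I would record the set-up: in characteristic $0$ the map $\mu_p:\Alb(X)\to\Alb(X)$ is an isogeny of degree $p^{2q}$ whose kernel is the elementary abelian group $G:=\Alb(X)[p]\cong(\mathbb Z/p\mathbb Z)^{2q}$; base-changing along $\mu_p$, the morphism $\nu_p:X_p\to X$ is an étale Galois cover with Galois group $G$, where $G$ acts on $X_p=X\times_{\Alb(X)}\Alb(X)$ by translation in the second factor, and $X=X_p/G$. The key elementary observation is then a descent criterion: $\sigma_p$ descends to a morphism $\sigma:X\to X$ with $\sigma\circ\nu_p=\nu_p\circ\sigma_p$ if and only if $\sigma_p$ normalises $G$ inside $\Aut(X_p)$, i.e.\ $\sigma_pG\sigma_p^{-1}=G$; indeed $\nu_p\circ\sigma_p$ is constant on $G$-orbits precisely when $\sigma_pg\sigma_p^{-1}$ is again a deck transformation of $\nu_p$ for every $g\in G$.

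Thus the whole problem reduces to proving $\sigma_pG\sigma_p^{-1}=G$ once $p$ is large. Put $G':=\sigma_pG\sigma_p^{-1}$, another subgroup of $\Aut(X_p)$ of order $p^{2q}$, and suppose $G'\neq G$. Then $K:=G\cap G'$ is a proper subgroup of the exponent-$p$ group $G$, so $[G:K]\ge p$, and by the product formula the subset $GG'\subseteq\Aut(X_p)$ satisfies
\[
|GG'|=\frac{|G|\,|G'|}{|K|}\ \ge\ p^{2q}\cdot p\ =\ p^{2q+1}.
\]
On the other hand $X_p$ is a minimal surface of general type (since $\nu_p$ is étale, $K_{X_p}=\nu_p^*K_X$ is nef) with $K_{X_p}^2=p^{2q}K_X^2$, and $\Aut(X_p)$ is finite; by Xiao's theorem there is a universal constant $c$ with $|\Aut(X_p)|\le c\,K_{X_p}^2=c\,p^{2q}K_X^2$. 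Combining the two estimates gives $p\le cK_X^2$. Hence as soon as the prime $p$ exceeds the fixed bound $cK_X^2$, which depends only on $X$, we must have $G'=G$, and $\sigma_p$ descends to a morphism $\sigma:X\to X$. (Primality of $p$ is used exactly at the step $[G:K]\ge p$: for composite $n$ one could only conclude a bound by the least prime factor of $n$, which is useless.)

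Finally I would check that $\sigma$ is a genuine involution. Since $\nu_p$ is surjective and $\sigma_p^2=\mathrm{id}$, the relation $\sigma^2\circ\nu_p=\nu_p\circ\sigma_p^2=\nu_p$ forces $\sigma^2=\mathrm{id}$. To exclude $\sigma=\mathrm{id}_X$: otherwise $\nu_p\circ\sigma_p=\nu_p$, so $\sigma_p$ is a deck transformation, i.e.\ $\sigma_p\in G$; but every non-identity element of $G$ moves the second factor of $X_p$, whereas $a_p\circ\sigma_p=a_p$ by \autoref{lem-step-II-1}, which forces $\sigma_p=\mathrm{id}_{X_p}$ — impossible, as $\sigma_p$ is induced via the birational morphism $\wt X_p\to X_p$ by the non-trivial fibre-involution $\tilde\sigma_p$ and is therefore non-trivial. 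This yields the commutative diagram in the statement. The main obstacle is the estimate of the middle step: one must ensure that Xiao's bound on $|\Aut(X_p)|$ is linear in $K_{X_p}^2$ with a constant independent of $X$ (and of $p$), so that the lower bound $p^{2q+1}$ eventually dominates, and that $X_p$ really is minimal of general type so that Xiao's theorem applies; the descent and the involution check are then formal.
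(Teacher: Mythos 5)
Your proposal is correct, and its skeleton is the same as the paper's: reduce the descent of $\sigma_p$ to the statement that $\sigma_p$ normalises the Galois group $G_p\cong(\mathbb Z/p\mathbb Z)^{\oplus 2q}$ of the \'etale cover $\nu_p$, and then rule out $\sigma_pG_p\sigma_p^{-1}\neq G_p$ for large primes $p$ by playing the order $p^{2q}$ of $G_p$ against Xiao's universal bound $|\Aut(X_p)|\leq cK_{X_p}^2=cp^{2q}K_X^2$. The one place you diverge is the group-theoretic bookkeeping: the paper forms the subgroup $\wt G_p=\langle G_p,\sigma_p\rangle$, observes that $G_p$ is a Sylow $p$-subgroup of it when $p>cK_X^2$, and derives the contradiction from the Sylow congruence $n_p\equiv 1\pmod p$ forcing $n_p\geq p+1$ conjugate Sylow subgroups; you instead apply the product formula $|G_pG_p'|=|G_p||G_p'|/|G_p\cap G_p'|\geq p^{2q+1}$ to the mere subset $G_pG_p'\subseteq\Aut(X_p)$, where $G_p'=\sigma_pG_p\sigma_p^{-1}$. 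Both yield the same threshold $p>cK_X^2$; your version is marginally more elementary since it needs only that a proper subgroup of an elementary abelian $p$-group has index at least $p$, not Sylow theory. You also add a check the paper leaves implicit, namely that the descended map $\sigma$ is not the identity (via \autoref{lem-step-II-1} and the fact that nontrivial deck transformations of $\nu_p$ act by nonzero translations on the $\Alb(X)$-factor), which is a worthwhile supplement rather than a deviation.
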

\begin{proof}
Let $\mu_n:\,\Alb(X) \to \Alb(X)$ be the multiplication by $n$ on the abelian variety $\Alb(X)$ as in \autoref{sec-step-I}. Then $\mu_n$ is an abelian cover with Galois group
$G_n\cong\big(\mathbb Z/n\mathbb Z\big)^{\oplus 2q}$.
By the construction of $X_n$ in \autoref{sec-step-I}, it is a fibre-product.
Hence $\nu_n:\,X_n \to X$ is also an abelian cover with Galois group
$G_n\cong\big(\mathbb Z/n\mathbb Z\big)^{\oplus 2q}$.
If
\begin{equation}\label{eqn-step-II-1}
\sigma_n \circ \tau \circ \sigma_n^{-1} \in G_n, \qquad \forall\,\tau\in G_n,
\end{equation}
then one defines $\sigma(x)=\nu_n\big(\sigma_n(x_n)\big)$, where $x_n\in X_n$ is any point satisfying $\nu_n(x_n)=x$.
By \eqref{eqn-step-II-1}, $\sigma$ is well-defined and the above diagram is obviously commutative.
Hence it suffices to prove \eqref{eqn-step-II-1} when $n=p$ is a prime number and sufficiently large.

Let $\wt G_p$ be the subgroup of the automorphism group $\Aut(X_p)$ of $X_p$ generated by $G_p$ and $\sigma_p$.
Let $p^d$ be the order of any Sylow $p$-subgroup of $\wt G_p$. Note that $K_{X_p}^2=p^{2q}K_X^2$,
and according to \cite[Theorem\,1]{xiao-94}, there exists a universal coefficient $c$ such that
$$p^d\leq |\wt G_p| \leq |\Aut(X_p)| \leq c K_{X_p}^2=p^{2q}\cdot cK_X^2.$$
Hence $d\leq 2q$ if $p>cK_X^2$. On the other hand, it is clear that $d\geq 2q$ due to the fact that $|G_p|=p^{2q}$.
Therefore $G_p$ is a Sylow $p$-subgroup of $\wt G_p$ if $p>cK_X^2$.
Note that \eqref{eqn-step-II-1} is equivalent to
\begin{equation}\label{eqn-step-II-2}
\sigma_n \circ G_n \circ \sigma_n^{-1} = G_n \text{~as subgroups of $\wt G_n$},
\end{equation}

Assume that $\sigma_n \circ G_n \circ \sigma_n^{-1} \neq G_n$, then the number of Sylow $p$-subgroups of $\wt G_p$
is $n_p\geq 2$. On the other hand, according to Sylow's theorems, it is known that $n_p$ divides $|\wt G_p|/p^{2q}$, and
$$n_p \equiv 1 \mod p.$$
It follows that $n_p\geq p+1$, and
$$p^{2q}\cdot cK_X^2 \geq |\Aut(X_p)|\geq |\wt G_p|\geq n_n\cdot p^{2q} \geq p^{2q}(p+1).$$
It is a contradiction if $p>cK_X^2$.
Hence if $p>cK_X^2$, then \eqref{eqn-step-II-2} holds and hence \eqref{eqn-step-II-1} holds too.
The proof is complete.
\end{proof}

\begin{proof}[Proof of \autoref{thm-step-II}]
The existence of an involution follows from \autoref{lem-step-II-2}.
Let $\sigma$ be as in \autoref{lem-step-II-2}.
Then according to \autoref{lem-step-II-1},
$$\Alb_X\circ\sigma\circ\nu_p=\Alb_X\circ\nu_p\circ\sigma_p=\mu_p\circ a_p\circ\sigma_p=\mu_p\circ a_p=\Alb_X\circ \nu_p.$$
Since $\nu_p:\,X_p \to X$ is surjective, it follows that $\Alb_X\circ\sigma=\Alb_X$,
i.e., the Albanese map $\Alb_X$ factors through the double cover $\pi:\,X \to Y:=X/\langle\sigma\rangle$.
\end{proof}

\section{Irregular surfaces coming from double covers}\label{sec-step-III}
Similar to the last section, we will always assume that $K_X^2 < \frac{9}{2}\chi(\mathcal O_X)$ in this section unless other explicit statements.
According to \autoref{thm-step-II}, the Albanese map $\Alb_X$ factors through a double cover
$\pi:\,X \to Y:=X/\langle\sigma\rangle$.
In the section, we deal with the irregular surfaces coming from double covers.

Let $Y' \to Y$ be the resolution of singularities, and $Y_0$ be a minimal model of $Y'$.
The double cover $\pi$ induces a double cover
\begin{equation}\label{eqn-step-III-1}
\pi_0:\,S_0 \lra Y_0,
\end{equation}
and $X$ is nothing but the minimal smooth model of $S_0$.
We want to compute the invariants of $X$ according to the double cover $\pi_0$.
First we have the following lemma
\begin{lemma}\label{lem-step-III-1}
Let $\Alb_{Y'}:\,Y' \to \Alb(Y')$ be the Albanese map of $Y'$.
Then $\Alb(X)\cong \Alb(Y')$.
Hence $Y'$ (and $Y_0$) is also a smooth surface of maximal Albanese dimension with $q(Y')=q$.
\end{lemma}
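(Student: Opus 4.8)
The plan is to compare $\Alb(X)$ and $\Alb(Y')$ through a common smooth model, using only that $\Alb_X$ factors through $Y$ (\autoref{thm-step-II}). Heuristically the statement holds because $\Alb_X=\Alb_X\circ\sigma$ and every holomorphic $1$-form on $X$ is pulled back from $\Alb(X)$, so every $1$-form on $X$ is $\sigma$-invariant and descends to the quotient; the actual work is to make this precise despite the singularities of $Y$.

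First I would choose a smooth projective surface $\tilde X$ together with a birational morphism $\tilde X\to X$ and a dominant (indeed generically $2$-to-$1$) morphism $\phi\colon\tilde X\to Y'$ resolving the rational map from $X$ to $Y'$ that is induced by $\pi$ and the birational morphism $Y'\to Y$. Since $\tilde X\to X$ is birational between smooth projective surfaces, $\Alb(\tilde X)\cong\Alb(X)$ and $H^0(\tilde X,\Omega^1_{\tilde X})\cong H^0(X,\Omega^1_X)$. Because $\phi$ followed by $Y'\to Y$ equals $\tilde X\to X\xrightarrow{\pi}Y$, and $\Alb_X$ factors as $X\xrightarrow{\pi}Y\to\Alb(X)$ by \autoref{thm-step-II}, the composite $\tilde X\xrightarrow{\phi}Y'\to\Alb(X)$ (with second arrow $Y'\to Y\to\Alb(X)$) agrees up to translation with the Albanese map of $\tilde X$. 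Reading this off on $1$-forms: pullback along the dominant $\phi$ gives an injection $\phi^*\colon H^0(Y',\Omega^1_{Y'})\hookrightarrow H^0(\tilde X,\Omega^1_{\tilde X})$, while pullback along $Y'\to\Alb(X)$ gives a map $H^0(\Alb(X),\Omega^1)\to H^0(Y',\Omega^1_{Y'})$ whose composition with $\phi^*$ is the canonical isomorphism $H^0(\Alb(X),\Omega^1)\xrightarrow{\ \sim\ }H^0(\tilde X,\Omega^1_{\tilde X})$; hence $\phi^*$ is also surjective, so it is an isomorphism and $q(Y')=h^0(X,\Omega^1_X)=q$. By the universal property of the Albanese, $Y'\to\Alb(X)$ factors (up to translation) as $Y'\xrightarrow{\Alb_{Y'}}\Alb(Y')\xrightarrow{h}\Alb(X)$; composing with $\phi$ and invoking uniqueness shows that $\Alb(\tilde X)\to\Alb(Y')\xrightarrow{h}\Alb(X)$ recovers the canonical isomorphism $\Alb(\tilde X)\cong\Alb(X)$. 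Normalising base points so that all these maps are homomorphisms, $\Alb(X)\to\Alb(Y')$ becomes a section of $h$, and since $\dim\Alb(X)=q=\dim\Alb(Y')$ this section is an isomorphism; therefore $\Alb(X)\cong\Alb(Y')$.

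Finally, under $\Alb(Y')\cong\Alb(X)$ the map $Y'\to\Alb(Y')$ corresponds to $Y'\to Y\to\Alb(X)$, whose image equals the image of $Y\to\Alb(X)$, which in turn equals $\Alb_X(X)$ because $\pi$ is surjective; as $X$ has maximal Albanese dimension this image is a surface, so $Y'$ has maximal Albanese dimension, and the same holds for a minimal model $Y_0$ by birational invariance of $q$ and of the Albanese map. The one point needing care is the bookkeeping forced by the singularities of $Y$: one cannot argue with $\pi$ directly but must pass to the resolved model $\tilde X$ dominating both $X$ and $Y'$ and keep the various morphisms to $\Alb(X)$ mutually compatible, so that the universal property of the Albanese applies cleanly; granting that, everything else is a diagram chase with spaces of global $1$-forms and homomorphisms of abelian varieties.
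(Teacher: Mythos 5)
Your proposal is correct and follows essentially the same route as the paper: pass to a smooth model $\tilde X$ dominating both $X$ and $Y'$, use the factorization of $\Alb_X$ through $Y$ together with the universal property of the Albanese to produce maps $\Alb(X)\to\Alb(Y')$ and $\Alb(Y')\to\Alb(X)$, and check they are mutually inverse. Your extra step of first computing $q(Y')=q$ via pullback of $1$-forms and then using a dimension count is just a more explicit version of the paper's assertion that the two maps are inverse to each other.
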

\begin{proof}
Since $Y'$ is the resolution of singularities of $Y$, by a possible blow up: $\wt X \to X$,
one gets a generically finite map $\wt X \to Y'$ of degree two induced by $\pi$.
Therefore, we obtain a surjective map $\alpha:\,\Alb(X)=\Alb(\wt X) \to \Alb(Y')$.
On the other hand, one has the composition map $Y' \to Y \to \Alb(X)$ whose image is $\Alb_X(X)$.
By the universal property of the Albanese variety, one obtains a surjective map $\beta:\,\Alb(Y') \to \Alb(X)$.
It is easy to check that $\alpha$ and $\beta$ are inverse to each other.
The proof is complete.
\end{proof}

According to the classification of algebraic surfaces, the Kodaira dimension $\kappa(Y_0)=\kappa(Y')\geq 0$.
More precisely, we have the following possibilities for the minimal model $Y_0$:
$$
\left\{\begin{aligned}
&\kappa(Y_0)=0:&&Y_0 \text{~is an abelian surface, in which case $q=2$};\\
&\kappa(Y_0)=1:&&Y_0 \cong E\times C \text{~with $g(E)=1$ and $g(C)=q-1$, in which case $q\geq 3$};\\
&\kappa(Y_0)=2:&&Y_0\text{~is a minimal surface of general type with $q(Y_0)=q$}.
\end{aligned}\right.
$$

\begin{proposition}\label{prop-step-III-i}
If $\kappa(Y_0)=0$ or $1$, then
\begin{equation}\label{eqn-step-III-i-1}
K_{X}^2 \geq 4\chi(\mathcal O_{X})+4(q-2).
\end{equation}
Moreover, the equality in \eqref{eqn-step-III-i-1} holds
if and only if the canonical model of $X$ is isomorphic to $S_0$, the branch divisor $R_0$ of the double cover $\pi_0$
has at worst simple singularities, and $K_{Y_0}\cdot R_0=4(q-2)$.
\end{proposition}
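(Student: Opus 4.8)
The plan is to run the standard double-cover computation for $\pi_0\colon S_0\to Y_0$ and then exploit $\kappa(Y_0)\le1$ to control the cross-term $K_{Y_0}\cdot R_0$. First I would set up the invariants of the double cover. Write $R_0\in|2L_0|$ for the branch divisor of $\pi_0$, and let $\epsilon\colon\widetilde Y\to Y_0$ be the composite of the blow-ups at the singular points of $R_0$ (counting infinitely near ones), where the $i$-th point has multiplicity $\nu_i\ge2$ and we put $m_i=\lfloor\nu_i/2\rfloor\ge1$; so $R_0$ has at worst simple singularities iff $m_i\le1$ (equivalently $\nu_i\le3$) for all $i$. Let $S'\to\widetilde Y$ be the induced double cover, branched over a smooth divisor. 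The canonical-resolution formulas give
\[
K_{S'}^2=2(K_{Y_0}+L_0)^2-2\sum_i(m_i-1)^2,\qquad
\chi(\mathcal O_{S'})=2\chi(\mathcal O_{Y_0})+\tfrac12 L_0\cdot(K_{Y_0}+L_0)-\tfrac12\sum_i m_i(m_i-1).
\]
Since $X$ is the minimal model of $S_0$, hence of $S'$, we have $\chi(\mathcal O_X)=\chi(\mathcal O_{S'})$ and $K_X^2\ge K_{S'}^2$, with equality iff $S'=X$; and when $R_0$ has at worst simple singularities, $S_0$ has only canonical singularities, $S'=X$ is its minimal resolution, and $K_X^2=2(K_{Y_0}+L_0)^2$.

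Combining the two formulas, and using $2(K_{Y_0}+L_0)^2-2L_0\cdot(K_{Y_0}+L_0)=2K_{Y_0}^2+2K_{Y_0}\cdot L_0$ together with $2m_i(m_i-1)-2(m_i-1)^2=2(m_i-1)\ge0$, I get
\[
K_X^2-4\chi(\mathcal O_X)\ \ge\ K_{S'}^2-4\chi(\mathcal O_{S'})\ =\ 2K_{Y_0}^2+2K_{Y_0}\cdot L_0-8\chi(\mathcal O_{Y_0})+2\sum_i(m_i-1).
\]
If $\kappa(Y_0)=0$, then $Y_0$ is abelian, $q=2$, $K_{Y_0}\equiv0$ and $\chi(\mathcal O_{Y_0})=0$, so the right-hand side equals $2\sum_i(m_i-1)\ge0=4(q-2)$. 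If $\kappa(Y_0)=1$, then $Y_0\cong E\times C$ with $g(E)=1$, $g(C)=q-1\ge2$; writing $p_2\colon Y_0\to C$ and $F\cong E$ for a general fibre, we have $K_{Y_0}^2=\chi(\mathcal O_{Y_0})=0$ and $K_{Y_0}\equiv2(q-2)F$, hence $K_{Y_0}\cdot L_0=2(q-2)(F\cdot L_0)$ and $K_{Y_0}\cdot R_0=4(q-2)(F\cdot L_0)$, so the right-hand side is $4(q-2)(F\cdot L_0)+2\sum_i(m_i-1)$. The crucial point is $F\cdot L_0\ge1$: otherwise $F\cdot R_0=0$, so $R_0$ is a union of fibres of $p_2$, $\pi_0$ is \'etale over a general $F\cong E$, and after Stein factorization $X$ admits a fibration over a curve with fibres of genus $\le1$, contradicting that $X$ is of general type. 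This proves \eqref{eqn-step-III-i-1} in both cases.

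For the equality statement, the displayed inequality forces, whenever equality holds in \eqref{eqn-step-III-i-1}, that $K_X^2=K_{S'}^2$, that every $m_i=1$ (so $R_0$ has at worst simple singularities), and---when $\kappa(Y_0)=1$---that $F\cdot L_0=1$, i.e. $K_{Y_0}\cdot R_0=4(q-2)$ (whereas $K_{Y_0}\cdot R_0=0=4(q-2)$ automatically when $\kappa(Y_0)=0$). Conversely, if $R_0$ has at worst simple singularities, then $S'=X$ and $K_X^2=2(K_{Y_0}+L_0)^2$, so the estimate above is an equality, and combined with $K_{Y_0}\cdot R_0=4(q-2)$ it yields equality in \eqref{eqn-step-III-i-1}. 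Finally one must identify $S_0$ with the canonical model of $X$: as $R_0$ has at worst simple singularities, $S_0$ has canonical singularities, $X\to S_0$ is crepant, and $K_{S_0}=\pi_0^*(K_{Y_0}+L_0)$ is nef and big; it is moreover ample, because a curve $\Gamma'\subset Y_0$ with $(K_{Y_0}+L_0)\cdot\Gamma'=0$ is excluded by an elliptic-fibration argument like the one above (for $\kappa(Y_0)=1$ this is immediate, since such a $\Gamma'$ would be a fibre $F$ with $(K_{Y_0}+L_0)\cdot F=F\cdot L_0=1$; for $\kappa(Y_0)=0$, the Hodge index theorem and the absence of negative curves on $Y_0$ force $\Gamma'$ to be an elliptic curve disjoint from $R_0$, over a general translate of which $\pi_0$ is \'etale).

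I expect the equality case to be the main obstacle: making sure numerical equality genuinely rules out non-simple singularities of $R_0$, and that $K_{S_0}$ is ample so that $S_0$ is literally the canonical model of $X$. That is where the elliptic-fibration trick, the careful bookkeeping of infinitely near points in the canonical resolution, and the control of the possible gap between $K_X^2$ and $K_{S'}^2$ all have to be combined; the inequality itself is then an elementary manipulation of the double-cover formulas.
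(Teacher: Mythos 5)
Your proof is correct and follows essentially the same route as the paper: the canonical-resolution formulas for the double cover, the reduction to $K_{S_t}^2-4\chi(\mathcal O_{S_t})=2\bigl(K_{Y_0}^2-4\chi(\mathcal O_{Y_0})\bigr)+K_{Y_0}\cdot R_0+2\sum(m_i-1)$, the key estimate $K_{Y_0}\cdot R_0\geq 4(q-2)$ via the ``fibre meets the branch locus'' argument, and the same bookkeeping for the equality case. The only (cosmetic) divergence is in identifying $S_0$ with the canonical model: you argue via ampleness of $K_{S_0}=\pi_0^*(K_{Y_0}+L_0)$, which requires the small extra observation that $Y_0$ carries no irreducible curves of negative self-intersection, whereas the paper simply notes that every $(-2)$-curve of $X=S_t$ must lie over the exceptional locus because $Y_0$ contains no rational curves.
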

\begin{proof}
Let $\pi_0$ be the induced double cover as in \eqref{eqn-step-III-1} whose cover datum is
$$\mathcal O_{Y_0}(R_0)\equiv \mathcal L_0^{\otimes 2}.$$
The surface $S_0$ may not be smooth.
To get the smooth model, we perform the canonical resolution as follows.
$$\mbox{}
  \xymatrix{
S_{t} \ar[r]^-{\phi_t}\ar[d]_-{\tilde \pi=\pi_t}&
 S_{t-1}\ar[r]^-{\phi_{t-1}}\ar[d]^-{\pi_{t-1}}&
\cdots \ar[r]^-{\phi_2} & S_1\ar[r]^-{\phi_1}\ar[d]_-{\pi_1}
& S_0 \ar[d]^-{\pi_0}\\
Y_{t} \ar[r]^-{\psi_t}& Y_{t-1}\ar[r]^-{\psi_{t-1}}&
\cdots \ar[r]^-{\psi_2} & Y_1\ar[r]^-{\psi_1} & Y_0}
$$
where $S_t$ is smooth and  $\psi_i$'s are successive blowing-ups resolving the singularities of $R_0$;
the map $\phi_i : X_i\to Y_i$ is the double cover determined by
$$\mathcal O_{Y_i}(R_i)\equiv \mathcal L_i^{\otimes 2}$$ with
$$R_i=\psi_i^*(R_{i-1})-2m_{i-1} E_i,\qquad
\mathcal L_i =\psi_i^*(\mathcal L_{i-1})\otimes \mathcal O_{Y_i}(-m_{i-1} E_i),$$
where $E_i$ is the exceptional divisor of $\psi_i$, $d_i$ is the multiplicity of the singular
point $y_i$ in $R_i$ and $m_i=[d_i/2]$, \big(\,[~] stands for the integral part\big).
\begin{definition}\label{def-simple-sing}
If $d_i=2$ or $3$ for any $0\leq i\leq t-1$, then we say that $R_0$ has at worst simple singularities.
In some literature (eg. \cite{xiao-91}), such singularities are also called negligible singularities.
\end{definition}
\noindent
The invariants of $S_t$ is computed by the following formulas (cf. \cite[\S\,V.22]{bhpv-04}):
\begin{eqnarray}
K_{S_t}^2&=&2K_{Y_0}^2+2K_{Y_0}\cdot R_0+\frac12R_0^2-2\sum_{i=0}^{t-1}(m_i-1)^2;\label{eqn-step-III-i-2}\\
\chi(\mathcal O_{S_t})&=&2\chi(\mathcal O_{Y_0})+\frac14K_{Y_0}\cdot R_0+\frac18R_0^2-\sum_{i=0}^{t-1}\frac12m_i(m_i-1).\label{eqn-step-III-i-3}
\end{eqnarray}

If $Y_0 \cong E\times C$ with $g(E)=1$ and $g(C)=q-1$,
then one has $H\cdot R_0\geq 2$ since $X$ is of general type, where $H=\{p\}\times C\subseteq Y_0$
for any $p\in E$. Hence
\begin{equation}\label{eqn-step-III-3}
K_{Y_0}\cdot R_0 =\big(2g(C)-2\big)H\cdot R_0\geq 4(q-2).
\end{equation}
If $Y_0$ is an abelian surface, then it is clear that $K_{Y_0}\cdot R_0=0=4(q-2)$.

Note that $X$ is nothing but the minimal model of $S_t$, which implies that
$K_{X}^2 \geq K_{S_t}^2$ and $\chi(\mathcal O_{X})=\chi(\mathcal O_{S_t})$.
Hence
\begin{eqnarray*}
K_{X}^2- 4\chi(\mathcal O_{X}) &\geq& K_{S_t}^2 -4\chi(\mathcal O_{S_t})\\
&=& 2\big(K_{Y_0}^2 -4\chi(\mathcal O_{Y_0})\big)+K_{Y_0}\cdot R_0+2\sum_{i=0}^{t-1}(m_i-1)\\
&\geq & K_{Y_0}\cdot R_0\geq 4(q-2).
\end{eqnarray*}

To characterize the equality, first it is clear that
the equality holds if the canonical model of $X$ is isomorphic to $S_0$ and the equality in \eqref{eqn-step-III-3} holds.
Conversely, if the equality in \eqref{eqn-step-III-i-1} holds,
then $X=S_t$, the equality in \eqref{eqn-step-III-3} holds, and $m_i=1$ for any $0\leq i\leq t-1$ according to the above arguments.
The later implies that $d_i=2$ or $3$, i.e., $R_0$ has at worst simple singularities,
from which it follows that the inverse image of these exceptional curves
is a union of $(-2)$-curves in $X=S_t$.
This together with the fact that $Y_0$ contains no rational curves
shows that $S_0$ is just the canonical model of $X$.
\end{proof}

\begin{lemma}\label{lem-step-III-ii-1}
Assume that $Y_0$ is of general type.
\begin{enumerate}
\item if $K_{Y_0}^2 \geq 4\chi(\mathcal O_{Y_0})+4\big(q(Y_0)-2\big)$, then
\begin{equation}\label{eqn-step-III-ii-1}
K_{X}^2 \geq 4\chi(\mathcal O_{X})+4(q-2),
\end{equation}
and the the equality can hold only when $q=2$, $\pi=\pi_0$ is unramified and $K_{Y}^2 = 4\chi(\mathcal O_{Y})$;

\item if $K_{Y_0}^2 \geq \frac92\chi(\mathcal O_{Y_0})$, then
\begin{equation}\label{eqn-step-III-ii-1'}
K_{X}^2 > 4\chi(\mathcal O_{X})+4(q-2).
\end{equation}
\end{enumerate}
\end{lemma}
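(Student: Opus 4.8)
The plan is to use the same canonical resolution $S_t \to Y_t$ as in the proof of \autoref{prop-step-III-i}, together with the key inequalities $K_X^2 \geq K_{S_t}^2$ and $\chi(\mathcal O_X) = \chi(\mathcal O_{S_t})$, and then reduce the problem to a numerical estimate on $K_{Y_0}^2 - 4\chi(\mathcal O_{Y_0})$ combined with a bound relating $q(Y_0) = q$ to the remaining terms. First I would record, exactly as before, the formulas \eqref{eqn-step-III-i-2} and \eqref{eqn-step-III-i-3}, which give
$$K_{S_t}^2 - 4\chi(\mathcal O_{S_t}) = 2\big(K_{Y_0}^2 - 4\chi(\mathcal O_{Y_0})\big) + K_{Y_0}\cdot R_0 + 2\sum_{i=0}^{t-1}(m_i-1).$$
Since $R_0$ is an effective divisor on the minimal surface of general type $Y_0$ and $K_{Y_0}$ is nef, the term $K_{Y_0}\cdot R_0$ is nonnegative, as is $\sum_{i=0}^{t-1}(m_i-1)$ because each $m_i \geq 1$. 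So in part (1), the hypothesis $K_{Y_0}^2 \geq 4\chi(\mathcal O_{Y_0}) + 4(q-2)$ immediately yields
$$K_X^2 - 4\chi(\mathcal O_X) \geq K_{S_t}^2 - 4\chi(\mathcal O_{S_t}) \geq 2\cdot 4(q-2) = 8(q-2) \geq 4(q-2),$$
which is \eqref{eqn-step-III-ii-1}. For part (2), if $K_{Y_0}^2 \geq \tfrac92\chi(\mathcal O_{Y_0})$, I would compare against the target $4\chi(\mathcal O_X) + 4(q-2)$: writing $c = \chi(\mathcal O_{Y_0}) = \chi(\mathcal O_X)/2$ roughly (more precisely using \eqref{eqn-step-III-i-3}), the gain $2(K_{Y_0}^2 - 4\chi(\mathcal O_{Y_0})) \geq \chi(\mathcal O_{Y_0})$ must be shown to exceed $4(q-2)$; here I would invoke that for a minimal surface of general type of maximal Albanese dimension one has $\chi(\mathcal O_{Y_0}) > 0$, and more usefully a lower bound on $\chi$ in terms of $q$ — indeed $\chi(\mathcal O_{Y_0}) \geq q - 2$ whenever $Y_0$ has maximal Albanese dimension (this is a standard consequence of the theory of generic vanishing / the Castelnuovo–de Franchis type bounds, e.g. $\chi \geq 2q - 4$ for surfaces not fibred over a curve of genus $\geq 2$, and at least $\chi \geq q - 2$ in general by Debarre's inequality). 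Combined with strict positivity of $\chi$ this gives the strict inequality \eqref{eqn-step-III-ii-1'}.

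The characterization of equality in part (1) is where the real work lies. If equality holds in \eqref{eqn-step-III-ii-1}, then tracing the chain of inequalities above forces several things simultaneously: $K_X^2 = K_{S_t}^2$ (so $X = S_t$, i.e., $S_t$ is already minimal and has no $(-1)$-curves), $K_{Y_0}\cdot R_0 = 0$, $\sum (m_i - 1) = 0$ (so every $m_i = 1$, hence $R_0$ has at worst simple singularities), and $2(K_{Y_0}^2 - 4\chi(\mathcal O_{Y_0})) + K_{Y_0}\cdot R_0 + 2\sum(m_i-1) = 4(q-2)$, i.e., $K_{Y_0}^2 - 4\chi(\mathcal O_{Y_0}) = 2(q-2)$. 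But the hypothesis was $K_{Y_0}^2 \geq 4\chi(\mathcal O_{Y_0}) + 4(q-2)$, which together with $K_{Y_0}^2 - 4\chi(\mathcal O_{Y_0}) = 2(q-2)$ forces $2(q-2) \geq 4(q-2)$, hence $q \leq 2$, so $q = 2$. Then $K_{Y_0}^2 = 4\chi(\mathcal O_{Y_0})$ and $K_{Y_0}\cdot R_0 = 0$; since $K_{Y_0}$ is nef and big and $R_0$ is effective, $K_{Y_0}\cdot R_0 = 0$ forces $R_0 = 0$ (any nonzero effective divisor meets the big nef class $K_{Y_0}$ positively unless it is supported on $K_{Y_0}$-trivial curves, but on a surface of general type the only such curves are $(-2)$-curves, which are contracted by the canonical model; since we have arranged $R_0$ to be a genuine branch divisor after canonical resolution with all $m_i = 1$, and $Y_0$ minimal of general type, $R_0 \equiv 0$ means $R_0 = 0$ as the cover is connected). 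Hence $\pi_0 = \pi$ is étale and $K_Y^2 = 4\chi(\mathcal O_Y)$ with $q(Y) = 2$, as claimed.

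The step I expect to be the main obstacle is the one needed in part (2) and implicitly behind the equality discussion: pinning down the sharp lower bound for $\chi(\mathcal O_{Y_0})$ in terms of $q$ for an arbitrary minimal surface of general type of maximal Albanese dimension. The inequality $\chi \geq q - 2$ (or the sharper $\chi \geq 2q - 4$ when $Y_0$ is not fibred over a curve of genus $\geq 2$) is exactly what makes $\tfrac12\chi(\mathcal O_{Y_0}) > 4(q-2)$ fail to be automatic — one genuinely needs $K_{Y_0}^2 \geq \tfrac92\chi$ to overpower $4(q-2)$, and I would need to verify the arithmetic carefully: from $K_{Y_0}^2 \geq \tfrac92\chi(\mathcal O_{Y_0})$ we get $2K_{Y_0}^2 - 8\chi(\mathcal O_{Y_0}) \geq \chi(\mathcal O_{Y_0}) \geq \max\{1, q-2\}$, which combined with \eqref{eqn-step-III-i-2}–\eqref{eqn-step-III-i-3} (noting $\chi(\mathcal O_X) = 2\chi(\mathcal O_{Y_0}) + \text{(nonneg.)}$, so $4(q-2) \leq 4\chi(\mathcal O_{Y_0}) - $ correction terms is too weak — one should instead compare directly) gives the strict inequality; the delicate point is ensuring the "$+4(q-2)$" on the target side is absorbed, which uses $q = q(Y_0)$ and the fact that for maximal Albanese dimension surfaces $K_{Y_0}^2 \geq \tfrac92\chi$ already implies a comfortable margin. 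Careful bookkeeping of which inequalities are strict, rather than any deep new idea, is the crux here.
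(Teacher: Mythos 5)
Your reduction of both parts to the identity
$$K_{S_t}^2-4\chi(\mathcal O_{S_t})=2\bigl(K_{Y_0}^2-4\chi(\mathcal O_{Y_0})\bigr)+K_{Y_0}\cdot R_0+2\sum_{i=0}^{t-1}(m_i-1)$$
and your derivation of \eqref{eqn-step-III-ii-1} in part (1) — including forcing $q=2$, $X=S_t$, $m_i=1$, $K_{Y_0}^2=4\chi(\mathcal O_{Y_0})$ and $K_{Y_0}\cdot R_0=0$ at equality — match the paper. But there are two genuine gaps.

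First, in the equality analysis you pass from $K_{Y_0}\cdot R_0=0$ to $R_0=0$. Since $K_{Y_0}$ is only nef and big, $K_{Y_0}\cdot R_0=0$ leaves open the possibility that $R_0$ is a nonempty union of $(-2)$-curves; your parenthetical (``$R_0\equiv 0$ means $R_0=0$'') conflates $K_{Y_0}$-triviality with numerical triviality and does not exclude this. The paper spends most of the proof of part (1) on exactly this case: it shows a connected component $D$ of such an $R_0$ could have only simple double points (otherwise a subdivisor $D'$ through a worse singularity satisfies $(D')^2\geq 0$, violating the Hodge index theorem), that an irreducible component would pull back to a $(-1)$-curve contradicting minimality of $X=S_t$, and that otherwise the evenness of $(D-C_i)\cdot C_i$ forces $D^2\geq 0$, again contradicting Hodge index. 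Without an argument of this kind the characterization of equality is incomplete.

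Second, part (2) as you set it up cannot close, and you say as much. The margin $2\bigl(K_{Y_0}^2-4\chi(\mathcal O_{Y_0})\bigr)\geq\chi(\mathcal O_{Y_0})$ combined with any bound of the shape $\chi(\mathcal O_{Y_0})\geq q-2$ (or $q-3$) is far too small to dominate $4(q-2)$. The missing idea is the standing hypothesis of \autoref{sec-step-III}, namely $K_X^2<\frac92\chi(\mathcal O_X)$, which your proposal never uses: expanding $K_{S_t}^2-\frac92\chi(\mathcal O_{S_t})<0$ via \eqref{eqn-step-III-i-2} and \eqref{eqn-step-III-i-3} yields $R_0^2>14\,K_{Y_0}\cdot R_0$, whence the Hodge index theorem gives $K_{Y_0}\cdot R_0>14K_{Y_0}^2\geq 63\chi(\mathcal O_{Y_0})$. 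Feeding this back into the first display gives $K_X^2-4\chi(\mathcal O_X)>64\chi(\mathcal O_{Y_0})$, and Beauville's bound $h^0(K)\geq 2q-4$, i.e. $\chi(\mathcal O_{Y_0})\geq\max\{1,q-3\}$, then comfortably exceeds $4(q-2)$. This combination of the $\frac92$-hypothesis with the Hodge index theorem is the key step your proposal lacks.
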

\begin{proof}
We follow the notations introduced in \autoref{prop-step-III-i}.

(1) If $K_{Y_0}^2 \geq 4\chi(\mathcal O_{Y_0})+4\big(q(Y_0)-2\big)$,
then by \eqref{eqn-step-III-i-2} and \eqref{eqn-step-III-i-3},
\begin{eqnarray*}
K_{X}^2- 4\chi(\mathcal O_{X}) &\geq& K_{S_t}^2 -4\chi(\mathcal O_{S_t})\\
&=&2\big(K_{Y_0}^2 -4\chi(\mathcal O_{Y_0})\big)+K_{Y_0}\cdot R_0+2\sum_{i=0}^{t-1}(m_i-1)\\
&\geq & 2\big(K_{Y_0}^2 -4\chi(\mathcal O_{Y_0})\big)\\
&\geq & 8\big(q(Y_0)-2\big)= 8\big(q-2\big) \geq 4(q-2).
\end{eqnarray*}
This proves \eqref{eqn-step-III-ii-1}.
If the equality in \eqref{eqn-step-III-ii-1} holds,
then $q=2$, $K_{Y_0}^2 =4\chi(\mathcal O_{Y_0})$,
$X=S_t$ is minimal, $m_i=1$ for any $0\leq i \leq t-1$ and $K_{Y_0}\cdot R_0=0$.
The last implies that $R_0=\emptyset$ or $R_0$ is a union of $(-2)$-curves since $Y_0$ is minimal of general type.
If $R_0=\emptyset$, then $\pi=\pi_0$ is unramified and hence $Y=Y_0$ as required.
Hence it suffices to deduce a contradiction if $R_0$ is a union of $(-2)$-curves.

Let $D=\sum\limits_{i=1}^{l} C_i \subseteq R_0$ be a connected component.
First we claim that $D$ has at worst simple double points as its singularities;
otherwise, let $p$ be a singularity of $D$ which is not a simple double point,
and $D'=\sum\limits_{i=1}^{k}C_i\subseteq D$ be all the irreducible components of $D$ through $p$.
Then either $k\geq 3$ and $C_i\cdot C_j\geq 1$ for any $1\leq i< j\leq k$, or $k=2$ and $C_1\cdot C_2\geq 2$.
Hence $$(D')^2= \sum\limits_{i=1}^{k}C_i^2 + 2\sum_{1\leq i< j\leq k}C_i\cdot C_j\geq 0.$$
This contradicts the Hodge index theorem, which asserts that for any divisor $L\in \Pic(Y_0)$,
$$(K_{Y_0}\cdot L)^2\geq K_{Y_0}^2\cdot L^2,$$
and the equality holds if and only if $K_{Y_0} \simn r L$ for some rational number $r$.
Hence $D$ has at worst simple double points as its singularities.

Let $\delta$ be the number of its singularities.
If $\delta=0$, i.e., $D=C_1$ is irreducible, then it is clear that the inverse image of $D$ is a $(-1)$-curve,
which contradicts the fact that $X=S_t$ is minimal.
Hence $\delta>0$.
Since $D$ is a connected component of the branch divisor of a double cover,
$(D-C_i)\cdot C_i\geq 2$ is even for any $1\leq i\leq l$.
It follows that $D\cdot C_i=C_i^2+(D-C_i)\cdot C_i\geq 0$,
which implies that $D^2=D\cdot \sum\limits_{i=1}^{l} C_i\geq 0$.
This is also a contradiction to the Hodge index theorem.
The proof is complete.\vspace{0.1cm}

(2)
If $K_{Y_0}^2 \geq \frac92\chi(\mathcal O_{Y_0})$, then by \eqref{eqn-step-III-i-2} and \eqref{eqn-step-III-i-3}
together with the assumption that $K_X^2<\frac92\chi(\mathcal O_{X})$ one obtains
\begin{eqnarray*}
0&>&K_{X}^2-\frac92\chi(\mathcal O_{X})\geq K_{S_t}^2-\frac92\chi(\mathcal O_{S_t})\\
&=& 2\big(K_{Y_0}^2 -4\chi(\mathcal O_{Y_0})\big)
       +\frac78K_{Y_0}\cdot R_0-\frac{1}{16}R_0^2+\frac14\sum_{i=0}^{t-1}(m_i-1)(m_i+8)\\
&\geq& \frac78K_{Y_0}\cdot R_0-\frac{1}{16}R_0^2.
\end{eqnarray*}
Hence $R_0^2> 14K_{Y_0}\cdot R_0$.
On the other hand, according to the Hodge index theorem, one has
$\big(K_{Y_0}\cdot R_0\big)^2 \geq K_{Y_0}^2\cdot R_0^2.$
Thus
$$K_{Y_0}\cdot R_0 > 14 K_{Y_0}^2.$$
Therefore
\begin{eqnarray*}
K_{X}^2- 4\chi(\mathcal O_{X}) &\geq& K_{S_t}^2 -4\chi(\mathcal O_{S_t})\\
&=& 2\big(K_{Y_0}^2 -4\chi(\mathcal O_{Y_0})\big)+K_{Y_0}\cdot R_0+2\sum_{i=0}^{t-1}(m_i-1)\\
&>& \chi(\mathcal O_{Y_0})+14K_{Y_0}^2\geq  64\chi(\mathcal O_{Y_0}).
\end{eqnarray*}
Since $Y_0$ is of general type, one has the inequality $h^0(X,K_X)\geq 2q-4$ (cf. \cite[Corollary\,X.8]{beauville-83}).
In other word, we have
$$\chi(\mathcal O_{Y_0}) \geq \min\big\{1,\,q-3\big\}.$$
Hence
\[K_{X}^2- 4\chi(\mathcal O_{X})> 64\cdot \min\big\{1,\,q-3\big\} > 4(q-2).\qedhere\]
\end{proof}

To deal with the case when $q=2$, we also need the following lemma.
\begin{lemma}\label{lem-step-III-2}
Let $\tilde \zeta:\, \wt W \to Z$ be a flat double cover of an abelian surface $Z$
whose branch divisor $R\subseteq Z$ has at worst simple singularities,
and $W \to \wt W$ be the resolution of singularities.
If $W$ is of general type, then $R$ is ample and we have the following isomorphism between the fundamental groups:
\begin{equation}\label{eqn-lem-step-III-1}
\zeta_*:~\pi_1(W) \overset{\cong}\lra \pi_1(Z) \cong \mathbb Z^{\oplus 4},\qquad
\text{where $\zeta:\,W \to Z$ is the composition}.
\end{equation}
In particular, any unramified cover $\tau:\,W' \to W$ is induced from a unramified cover $\upsilon:\,Z' \to Z$
with the following commutative diagram
$$\xymatrix{W'\ar[rr]^-{\tau} \ar[d]_-{\zeta'} && W \ar[d]^-{\zeta}\\
Z' \ar[rr]^-{\upsilon} && Z}$$
\end{lemma}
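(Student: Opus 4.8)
The plan is to prove the three assertions of the lemma in order: that $R$ is ample; that $\zeta_*$ is an isomorphism onto $\pi_1(Z)\cong\mathbb Z^{\oplus 4}$; and then the statement on unramified covers, which will follow formally.

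\textbf{Step 1 (ampleness of $R$).} Applying the canonical--resolution formulas \eqref{eqn-step-III-i-2}--\eqref{eqn-step-III-i-3} to $\tilde\zeta\colon \widetilde W\to Z$ with $K_Z\equiv 0$, $\chi(\mathcal O_Z)=0$ and all $m_i=1$ (valid because $R$ has at worst simple singularities) gives $K_W^2=\tfrac12 R^2$ and $\chi(\mathcal O_W)=\tfrac18 R^2$; since $W$ is of general type, $R^2=2K_W^2>0$, so $R\neq 0$. On the abelian surface $Z$, adjunction gives $p_a(C)=1+\tfrac12 C^2\geq 1$ for every irreducible curve $C$ (there are no rational curves on $Z$), hence $C^2\geq 0$ always. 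Consequently the effective divisor $R$ is nef (an irreducible $C$ with $R\cdot C<0$ would be a component of $R$ with $C^2<0$), and by the Hodge index theorem $R\cdot C=0$ for an irreducible $C$ would force $C^2<0$ unless $C\equiv 0$; thus $R$ is ample by Nakai--Moishezon, and so is its square root $\mathcal L$.

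\textbf{Step 2 ($\zeta_*$ is an isomorphism).} For surjectivity I use the Albanese map. From $\tilde\zeta_*\mathcal O_{\widetilde W}=\mathcal O_Z\oplus\mathcal L^{-1}$ and $h^1(\mathcal L^{-1})=h^1(\mathcal L)=0$ on the abelian surface $Z$ one gets $q(W)=q(Z)=2$; the induced morphism $\Alb(W)\to Z$ is surjective, hence an isogeny, and writing $\zeta$ as the Albanese map $W\to\Alb(W)$ (of degree $e$) followed by that isogeny (of degree $d$), $ed=\deg\zeta=2$ together with the fact that $W$ is not birational to an abelian surface forces $e=2$, $d=1$. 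So $\Alb(W)\cong Z$, $\zeta$ is the Albanese map, and $\zeta_*\colon\pi_1(W)\to\pi_1(Z)$ is onto. For injectivity, note first that $\pi_1(W)\cong\pi_1(\widetilde W)$: the hypothesis on $R$ means $\widetilde W$ has only rational double points, and resolving an isolated normal surface singularity with simply connected exceptional fibre does not change $\pi_1$ (both groups equal $\pi_1$ of the smooth locus modulo the normal closures of the links' fundamental groups). It therefore suffices to show $\tilde\zeta_*\colon\pi_1(\widetilde W)\to\pi_1(Z)$ is injective.

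This injectivity is the heart of the matter, and I would prove it by a Lefschetz-type argument. Write $\widetilde W=\{y^2=s\}\subset\mathrm{Tot}(\mathcal L)$ with $s\in H^0(Z,\mathcal L^{\otimes 2})$ defining $R$; in the $\mathbb P^1$-bundle compactification $\overline{\mathbb L}=\mathbb P(\mathcal O_Z\oplus\mathcal L)\xrightarrow{p}Z$ the closure of $\widetilde W$ is disjoint from one of the two distinguished sections and lies in $|\mathcal O_{\overline{\mathbb L}}(2)|$. Since $\pi_1(\overline{\mathbb L})=\pi_1(Z)$, if $\widetilde W$ were an ample divisor the Lefschetz theorem for fundamental groups of ample divisors would give $\pi_1(\widetilde W)\xrightarrow{\cong}\pi_1(\overline{\mathbb L})=\pi_1(Z)$, the isomorphism being induced by $\widetilde W\hookrightarrow\overline{\mathbb L}\xrightarrow{p}Z$, i.e.\ by $\tilde\zeta$. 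The obstruction is that $\mathcal O_{\overline{\mathbb L}}(1)$ is only nef here and that $\widetilde W$ is singular, so the Lefschetz \emph{isomorphism} (as opposed to mere surjectivity, which is robust) requires care. I expect the main work to lie in removing these defects: one may contract the curves killed by $|\mathcal O_{\overline{\mathbb L}}(k)|$ to obtain a projective cone $\overline{\mathbb L}'$ over $(Z,\mathcal L^{\otimes k})$ in which $\widetilde W$ \emph{is} ample and which is smooth away from $\widetilde W$, and apply Lefschetz there; alternatively, since $\mathcal L$ is ample the system $|R|=|\mathcal L^{\otimes 2}|$ is base-point-free, so a general $R'\in|R|$ is smooth and irreducible and the associated double cover $\widetilde W'$ is a smooth minimal surface of general type with $K_{\widetilde W'}=\tilde\zeta'^*\mathcal L$ ample; the divisors in $|R|$ with at worst simple singularities form a connected family along which the cover acquires only rational double points, so the total space of the family is smooth and $\pi_1$ is constant there (the monodromy is generated by Dehn twists along simply connected vanishing cycles). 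This reduces to the smooth case, where either Lefschetz applies cleanly or one analyses the étale double cover $\widetilde W'\setminus\tilde R'\to Z\setminus R'$ directly, using that $R'$ is a smooth connected ample curve and $\pi_1(R')\to\pi_1(Z)$ is onto.

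\textbf{Step 3 (unramified covers).} Given a connected unramified cover $\tau\colon W'\to W$, corresponding to a finite-index subgroup $\Gamma\leq\pi_1(W)$, the isomorphism $\zeta_*$ carries $\Gamma$ onto a finite-index subgroup of $\pi_1(Z)\cong\mathbb Z^{\oplus 4}$, which defines a connected étale cover $\upsilon\colon Z'\to Z$ with $Z'$ again an abelian surface. Since $\upsilon$ is étale so is the base change $W\times_Z Z'\to W$, and it corresponds to $\zeta_*^{-1}(\zeta_*\Gamma)=\Gamma$ by injectivity of $\zeta_*$; hence $W\times_Z Z'\cong W'$ over $W$, which is the asserted commutative diagram. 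The single genuine difficulty in the whole proof is the injectivity of $\zeta_*$ in Step 2 — i.e.\ making a Lefschetz isomorphism available despite the fact that $\widetilde W$ is not an ample divisor in its natural $\mathbb P^1$-bundle compactification and that $\widetilde W$ carries rational double points.
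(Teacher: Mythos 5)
Your Steps 1 and 3 coincide with the paper's proof (ampleness via Hodge index plus Nakai--Moishezon, and the covering statement as a formal consequence of the $\pi_1$-isomorphism), and the ``alternative'' you sketch at the end of Step 2 is in fact exactly the route the paper takes: one does \emph{not} argue via a Lefschetz theorem in $\mathbb P(\mathcal O_Z\oplus\mathcal L)$, so the obstructions you worry about there never arise. The paper considers the locus $\Lambda\subseteq|R|$ of reduced members with at worst simple singularities, which is open and dense (hence connected), forms the flat family of the associated double covers, and closes the two points you leave open by citation: the constancy of $\pi_1$ along this family of irreducible surfaces with only rational double points is \cite[Lemma\,9]{xiao-91} (your ``Dehn twists along simply connected vanishing cycles'' is the right heuristic but is not an argument as stated), and the isomorphism $\pi_1(\wt W_\lambda)\xrightarrow{\ \cong\ }\pi_1(Z)$ for a smooth ample branch member $R_\lambda$ (which exists since $|R|=|\mathcal L^{\otimes 2}|$ is base-point-free) is \cite[Corollary\,2.7]{nori-83}. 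This last step is precisely the point your proposal does not actually prove --- ``either Lefschetz applies cleanly or one analyses the \'etale double cover directly'' is not a proof, and the needed statement is a genuine theorem of Nori rather than a routine computation --- so you should either invoke that reference or supply its argument; with that citation in place your outline becomes the paper's proof. (Your Albanese argument for surjectivity of $\zeta_*$ is correct but redundant once Nori's isomorphism is available.)
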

\begin{proof}
First we show that $R$ is ample.
According to \eqref{eqn-step-III-i-2} together with our assumption, we have $0<K_Z^2=\frac12R^2$,
from which it follows that $R^2>0$.
For any curve $C\subseteq Z$, $R\cdot C \geq 0$ since $Z$ is an abelian surface;
otherwise, $C^2<0$ according to the Hodge index theorem,
which is impossible as $C$ is contained in an abelian surface.
This shows that $R\cdot C>0$. Hence $R$ is ample.

Next we prove the isomorphism \eqref{eqn-lem-step-III-1}.
Since $R$ is the branch divisor of the double cover $\tilde\zeta$,
there exists a line bundle $\mathcal L$ such that $\mathcal O_{Z}(R) \equiv \mathcal L^{\otimes 2}$.
Consider the complete linear system $|R|$ as a projective space and let $\Lambda\subseteq |R|$ be the open and dense
subvariety corresponding to reduced divisors with at worst simple singularities.
For each $\lambda\in \Lambda$, let $R_{\lambda}\subseteq |R|$ be the corresponding divisor and
$\tilde\zeta_{\lambda}:\,\wt W_{\lambda} \to Z$ the double cover determined by the relation
$\mathcal O_{Z}(R_{\lambda}) \equiv \mathcal L^{\otimes 2}$.
Then we obtain a flat family of projective surfaces
$$\tilde f:\,\bigcup_{\lambda\in\Lambda} \wt W_{\lambda} \lra \Lambda.$$
Moreover, by our assumption, each fibre of $\tilde f$ is irreducible and reduced with at worst rational double points
as its singularities.
Hence every fibre of $\tilde f$ has the same fundamental group by \cite[Lemma\,9]{xiao-91}.
On the other hand, since $\mathcal O_{Z}(R) \equiv \mathcal L^{\otimes 2}$, it follows that
the complete linear system $|R|$ is base-point-free.
Thus one can choose an element $R_{\lambda}\subseteq |R|$ such that $R_{\lambda}$ is smooth.
Hence according to \cite[Corollary\,2.7]{nori-83}, one has the following isomorphism:
$$(\tilde\zeta_{\lambda})_*:~\pi_1(\wt W_{\lambda}) \lra \pi_1(Z).$$
So we obtain an isomorphism
$$\tilde\zeta_*:~\pi_1(\wt W) \lra \pi_1(Z).$$
Finally, Since $\wt W$ has at worst rational double points as its singularities,
one has a natural isomorphism $\pi_1(W) \to \pi_1(\wt W)$. The proof is complete.
\end{proof}

\begin{proof}[Proof of \autoref{thm-main}]
(1). It suffices to prove that if $K_X^2< \frac92\chi(\mathcal O_X)$, then
\begin{equation}\label{eqn-pf-thm-main-1}
K_X^2\geq 4\chi(\mathcal O_X)+4(q-2).
\end{equation}

We prove \eqref{eqn-pf-thm-main-1} by induction on the degree of the Albanese map of the surface.
According to \autoref{thm-step-II}, $\deg\left(\Alb_{X}\right)>1$.
More precisely, the Albanese map $\Alb_X$ of $X$ factors through a double cover
$\pi:\,X\to Y=X/\langle\sigma\rangle$. Let $Y_0$ be the minimal smooth model of $Y$.
It follows from \autoref{lem-step-III-1} that $Y_0$ is minimal surface of maximal Albanese dimension with
$$q(Y_0)=q,\quad\qquad\deg\left(\Alb_{Y_0}\right)=\frac12\deg\left(\Alb_X\right).$$
Moreover, according to \autoref{prop-step-III-i} and \autoref{lem-step-III-ii-1}(2), we may assume that
$$K_{Y_0}^2 < \frac92\chi(\mathcal O_{Y_0}).$$
In other word, $Y_0$ is a minimal surface of general type and maximal Albanese dimension with $K_{Y_0}^2 < \frac92\chi(\mathcal O_{Y_0})$ and $\deg\left(\Alb_{Y_0}\right)=\frac12\deg\left(\Alb_X\right)$.
By induction, we obtain
$$K_{Y_0}^2 \geq 4\chi(\mathcal O_{Y_0})+4\big(q(Y_0)-2\big).$$
Hence \eqref{eqn-pf-thm-main-1} follows from \autoref{lem-step-III-ii-1}(1).

(2). We use the same notations as above.
First it is clear that the equality \eqref{eqn-main-2} holds if the canonical model of $X$ satisfies the condition in the theorem.
Conversely, if \eqref{eqn-main-2} holds,
then the Albanese map $\Alb_X$ of $X$ factors through a double cover
$\pi$ by \autoref{thm-step-II} since $K_X^2<\frac92\chi(\mathcal O_X)$.
According to the proof of (1) together with \autoref{prop-step-III-i} and \autoref{lem-step-III-ii-1},
our theorem is proved unless $q=2$, $\pi=\pi_0$ is unramified and $Y=X/\langle\sigma\rangle$ is minimal of general type
of maximal dimension with
$$q(Y)=q,\qquad K_Y^2=4\chi(\mathcal O_Y).$$
By repeating the above process, we obtain that if the equality \eqref{eqn-main-2} holds,
then either the canonical model of $X$ is a flat double cover of $Y$ as required in our theorem,
or $q=2$ and the Albanese map $\Alb_X$ factors as
$$\xymatrix{Z_0=X \ar[r]^-{\zeta_1=\pi} \ar@/_5mm/"1,5"_-{\Alb_X}  & Z_1=Y \ar[r]^-{\zeta_2} & \cdots  \ar[r]^-{\zeta_{s-1}}
&Z_{s-1} \ar[r]^-{\zeta_s}& Z_s=\Alb_X(X),}$$
where $s\geq 2$, $\zeta_i$ is a unramified double cover for any $1\leq i\leq s-1$,
$Z_{s-1}$ is of general type, $Z_s$ is an abelian surface,
and $\zeta_s$ is the resolution of a flat double cover $\tilde \zeta_{s-1}:\, \wt Z_{s-1} \to Z_s$
whose branch divisor has at worst simple singularities.
To complete the proof, it suffices to derive a contradiction if the later possibility happens.

Assume that the later case happens. Then $Z_i$ is of maximal Albanese dimension with
\begin{equation}\label{eqn-step-III-2}
\deg\left(\Alb_{Z_i}\right)=2^{s-i},\qquad \forall~0\leq i\leq s.
\end{equation}
By \autoref{lem-step-III-2}, there exists an abelian surface $Z'$ and an unramified double cover $\upsilon:\,Z' \to Z_s$
with the following commutative diagram.
$$\xymatrix{
Z_{s-2}\ar[rr]^-{\zeta_{s-1}} \ar[d]_-{\zeta'} && W \ar[d]^-{\zeta_s}\\
Z' \ar[rr]^-{\upsilon} && Z_s}$$
Since $Z'$ is an abelian surface and $\deg(\zeta')=2$,
it follows that $\deg\left(\Alb_{Z_{s-2}}\right)$ is at most $2$.
It is a contradiction to \eqref{eqn-step-III-2}.
This completes the proof.
\end{proof}

\begin{proof}[Proof of \autoref{cor-main-1}]
It follows directly from \autoref{thm-main} except
the ampleness of the branch divisor in characterizing the equality $K_X^2=4\chi(\mathcal O_X)$,
which comes from \autoref{lem-step-III-2}.
\end{proof}

\enddocument
\begin{thebibliography}{BHPV04}

\bibitem[Bar15]{barja-15}
M.~A. Barja.
\newblock {\em Generalized {C}lifford--{S}everi inequality and the volume of
  irregular varieties}.
\newblock Duke Math. J., 164(3):\,541--568, 2015.

\bibitem[BPS15]{barja-pardini-stoppino-15}
M.~A. Barja, R. Pardini, and L. Stoppino.
\newblock {\em Surfaces on the Severi Line}.
\newblock Preprint, 2015.

\bibitem[Bea83]{beauville-83}
A.~Beauville.
\newblock {\em Complex algebraic surfaces}, volume~68 of {\em London
  Mathematical Society Lecture Note Series}.
\newblock Cambridge University Press, Cambridge, 1983.
\newblock Translated from the French by R. Barlow, N. I. Shepherd-Barron and M.
  Reid.

\bibitem[BHPV04]{bhpv-04}
W.~P. Barth, K.~Hulek, C.~A.~M. Peters, and A.~Van~de Ven.
\newblock {\em Compact complex surfaces}, volume~4 of {\em Ergebnisse der
  Mathematik und ihrer Grenzgebiete. {\rm 3}. Folge.}
\newblock Springer-Verlag, Berlin, second edition, 2004.

\bibitem[LB92]{lange-birkenhake-92}
H.~Lange and C.~Birkenhake.
\newblock {\em Complex abelian varieties}, volume 302 of {\em Grundlehren der
  Mathematischen Wissenschaften [Fundamental Principles of Mathematical
  Sciences]}.
\newblock Springer-Verlag, Berlin, 1992.

\bibitem[LZ15]{lu-zuo-15a}
X.~Lu and K.~Zuo.
\newblock {\em On the slope conjecture of {B}arja and {S}toppino for fibred
  surfaces}.
\newblock arXiv:1504.06276v1, 2015.

\bibitem[Man03]{manetti-03}
M.~Manetti.
\newblock {\em Surfaces of {A}lbanese general type and the {S}everi
  conjecture}.
\newblock Math. Nachr., 261/262:\,105--122, 2003.

\bibitem[MLP11]{lopes-pardini-11}
M.~Mendes~Lopes and R.~Pardini.
\newblock {\em Severi type inequalities for irregular surfaces with ample
  canonical class}.
\newblock Comment. Math. Helv., 86(2):\,401--414, 2011.

\bibitem[MLP12]{lopes-pardini-12}
M.~Mendes~Lopes and R.~Pardini.
\newblock {\em The geography of irregular surfaces}.
\newblock In {\em Current developments in algebraic geometry}, volume~59 of
  {\em Math. Sci. Res. Inst. Publ.}, pages 349--378. Cambridge Univ. Press,
  Cambridge, 2012.

\bibitem[Nor83]{nori-83}
M.~V. Nori.
\newblock {\em Zariski's conjecture and related problems}.
\newblock Ann. Sci. \'Ecole Norm. Sup. (4), 16(2):\,305--344, 1983.

\bibitem[Par05]{pardini-05}
R.~Pardini.
\newblock {\em The {S}everi inequality {$K^2\geq 4\chi$} for surfaces of
  maximal {A}lbanese dimension}.
\newblock Invent. Math., 159(3):\,669--672, 2005.

\bibitem[Xia91]{xiao-91}
G.~Xiao.
\newblock {\em {$\pi_1$} of elliptic and hyperelliptic surfaces}.
\newblock Internat. J. Math., 2(5):\,599--615, 1991.

\bibitem[Xia94]{xiao-94}
G.~Xiao.
\newblock {\em Bound of automorphisms of surfaces of general type. {I}}.
\newblock Ann. of Math. (2), 139(1):\,51--77, 1994.

\bibitem[Zha14]{zhang-14}
T.~Zhang.
\newblock {\em Severi inequality for varieties of maximal {A}lbanese
  dimension}.
\newblock Math. Ann., 359(3-4):\,1097--1114, 2014.

\end{thebibliography}
